\let\oldlabel=\label
\def\prellabel{\marginparsep=1em
    \def\label##1{\oldlabel{##1}\ifmmode\else\ifinner\else
         \marginpar{{\footnotesize\ \\ \tt
                    ##1}}\fi\fi}}
\def\AA{\mathbb{A}}
\def\pr{\operatorname{pr}}
\def\B{\operatorname{B}}
\def\SF{\operatorname{SF}}
\def\conv{\operatorname{conv}}
\def\ee{\mathbb{e}}
\def\Im{\operatorname{Im}}
\def\vertex{\operatorname{vert}}
\def\inte{\operatorname{int}}
\def\Hilb{\operatorname{Hilb}}
\def\Fano{\operatorname{Fano}}
\def\Aff{\operatorname{Aff}}
\def\BB{\mathbb{B}}
\def\rank{\operatorname{rank}}
\def\spec{\operatorname{Spec}}
\def\proj{\operatorname{Proj}}
\def\Aut{\operatorname{Aut}}
\def\kk{\mathbf{k}}
\def\MM{\mathbf{M}}
\def\PPi{\mathbb\Pi}
\def\gp{\operatorname{gp}}
\def\l{\operatorname{\bf l}}
\def\gr{\operatorname{gr}}
\def\RR{\mathbb R}
\def\PP{\mathbb P}
\def\ZZ{\mathbb Z}
\def\NN{\mathbb N}
\def\MM{\mathbb M}
\let\phi=\varphi
\let\epsilon=\varepsilon
\newtheorem{lemma}{Lemma}[section]
\newtheorem{corollary}[lemma]{Corollary} 
\newtheorem{theorem}[lemma]{Theorem}
\theoremstyle{definition} 
\newtheorem{definition}[lemma]{Definition} 
\newtheorem{remark}[lemma]{Remark}
\newtheorem{example}[lemma]{Example} 
\newtheorem{question}[lemma]{Question}
\begin{document}

\title[Bottom complexes]{Bottom complexes}

\author{Joseph Gubeladze}

\address{Department of Mathematics\\
         San Francisco State University\\
         1600 Holloway Ave.\\
         San Francisco, CA 94132, USA}
\email{soso@sfsu.edu}


\subjclass[2010]{13F55, 13F65, 52B20}

\keywords{Bottom complex, simplicial complex, conic realization}

\begin{abstract}
The \emph{bottom complex} of a finite polyhedal pointed rational cone is the lattice polytopal complex of the compact faces of the convex hull of nonzero lattice points in the cone. The algebra, associated to the bottom complex of a cone, defines a flat deformation of the affine toric variety, associated to the polyhedral cone, set-theoretically. We describe three explicit infinite families of abstract polytopal complexes, defining such flat deformations scheme-theoretically.
\end{abstract}

\maketitle

\section{Introduction} A finite, pointed, rational cone $C\subset\RR^d$, i.e., a cone generated by a finite subset of $\ZZ^d$ and containing no non-zero linear subspace, gives rise to a \emph{lattice polyhedal complex:} the complex of the compact faces of the convex hull of $(C\cap\ZZ^d)\setminus\{0\}$. This is the \emph{bottom complex} of $C$, denoted by $\BB(C)$. Now assume we are given a lattice polytopal complex $\PPi$ as a set of lattice polytopes along with data on how polytopes are glued along common faces; see Section \ref{Complexes} for the formal definitions. When does there exist a cone $C$, whose bottom complex is isomorphic to $\PPi$? This is a nontrivial question, even for simplicial complexes equipped with the coarsest lattice structure.

There is an alternative formulation of this question in terms of the \emph{polyhedral algebra} $\kk[\BB(C)]$ of the bottom complex $\BB(C)$, which extends the notion of Stanley-Reisner ring $\kk[\Delta]$ of a simplicial complex $\Delta$ (Section \ref{Complexes}). For a cone $C\subset\RR^d$ and an algebraically closed field $\kk$, the Rees algebra with respect to the maximal monomial ideal $I\subset\kk[C\cap\ZZ^d]$, defines a flat deformation of the toric ring $\kk[C\cap\ZZ^d]$ to $\gr_I(\kk[C\cap\ZZ^d])$. On the one hand, the polyhedral algebra of a lattice polytopal complex determines uniquely the underlying complex (Theorem \ref{isomorphism}) and, on the other hand, $\spec(\gr_I(\kk[C\cap\ZZ^d]))$ and $\spec(\kk[\BB(C)])$ agree set-theoretically (Theorem \ref{Folklore}). Thus our problem asks to characterize lattice polytopal complexes, defining Rees deformations of affine toric varieties set-theoretically. But one can go one step further and ask for a characterization of the lattice polytopal complexes, defining such deformations \emph{scheme-theoretically}. We call such complexes \emph{reduced bottom}.

A necessary condition for a lattice polytopal complex to be reduced bottom is that the underlying topological space must be a topological ball. In dimension one this is also sufficient, an old observation in toric geometry \cite[Section 1.6]{Oda}; see also Remark 
\ref{1-dimensional}. In high dimensions the topological condition falls far short of being sufficient, even for simplicial complexes (Section \ref{Obstructions}).

In this paper we give several explicit infinite families of reduced bottom polytopal complexes. This includes:
\begin{enumerate}[\rm$\bullet$]
\item
Complexes of arbitrary dimension, defined by a shellability like condition (Theorem \ref{MAIN}), which include as a proper subclass the \emph{stacked} complexes, built up inductively by stacking one polytope at a time along a common face;
\item An exhaustive characterization of the reduced bottom realizations, satisfying a natural convexity condition, of the pyramid over a general simplicial sphere (Theorem \ref{Fano_bottom}); such realizations only exist if the simplicial sphere is combintorially equialent to the  boundary complex of a smooth Fano polytope;
\item A series of reduced bottom simplicial discs (Theorem \ref{Delta}), not covered by the theorems above; we do not know whether all simplicial discs are reduced bottom, but the positive answer can be given to the weakar version, when the normality condition is relaxed for the underlying monoid (Theorem \ref{evidence}). 
\end{enumerate}

\section{Lattice polytopal complexes and their algebras}\label{Complexes}

\subsection{Cones and polytopes}

We use the following notation and terminology:

\begin{enumerate}[\rm$\centerdot$]
\item
$\NN=\{1,2,\ldots\}$, $\ZZ_{\ge n}=\{n,n+1,n+2,\ldots\}$, $\ZZ_+=\ZZ_{\ge0}$, $\RR_+$ is the set the non-negative reals, and $\RR_{>0}$ that of the positive ones.
\item For a subset $X$ of a finite dimensional real space, its affine and convex hulls are denoted by $\Aff(X)$ and $\conv(X)$, respectively; the \emph{conical set} over $X$ is $\RR_+X=\{ax\ |\ a\in\RR_+,\ x\in X\}$; if $X$ is convex its relative interior is denoted by $\inte(X)$.
\item $\ee_i$ is the $i$-th standard basic vector in $\RR^d$.
\end{enumerate}

\medskip\noindent\emph{Affine spaces and polytopes:}

\begin{enumerate}[\rm$\centerdot$]
\item An \emph{affine space} is a shifted linear subspace, i.e., a subset of a real vector space $V$ of the form $H=x+U$, where $x\in V$ and $U\subset V$ is a subspace; an \emph{affine map} is a shifted linear map.
\item An \emph{affine lattice} $\Lambda$ in an affine space $H=x+U$ (notation as above) is a subset of the form $x+\Lambda_0$, where $\Lambda_0\subset U$ is a lattice;
\item All our polytopes are convex in their ambient vector spaces; for a polytope $P$, its vertex set is denoted by $\vertex(P)$;
\item For an affine lattice $\Lambda$, a polytope $P$ is called a \emph{$\Lambda$-polytope} if $\vertex(P)\subset\Lambda$; a \emph{lattice polytope} refers to a $\ZZ^d$-polytope for some $d\in\NN$.
\item Two lattice polytopes $P\subset\RR^d$ and $P\rq{}\subset\RR^{d\rq{}}$ are \emph{unimodularily equivalent} if there is an affine isomorphism $P\to P\rq{}$, yielding an affine isomorphism $\Aff(P)\cap\ZZ^d\to\Aff(P\rq{})\cap\ZZ^{d\rq{}}$.
\item For a lattice $\Lambda$ in a vector space $V$, we say that an affine hyperplane $H\subset V$ is on \emph{lattice distance one} from a point $x\in\Lambda$ if there are no elements of $\Lambda$ strictly between $H$ and its parallel translate through $x$. 
\end{enumerate}

\medskip\noindent\emph{Normal polytopes:}
\begin{enumerate}[\rm$\centerdot$]
\item
A lattice polytope $P\subset\RR^d$ is \emph{normal} if the following implication holds:
\begin{align*}
c\in\NN,\ z\in(cP)\cap\ZZ^d\ \Longrightarrow\ 
\exists y_1,\ldots,y_c\in P\cap\ZZ^d,\quad y_1+\cdots+y_c=z;
\end{align*}
\item More generally, for an affine space $H$ and an affine lattice $\Lambda\subset H$, a $\Lambda$-polytope $P$ is called \emph{$\Lambda$-normal} if $P$ becomes normal upon making some (equivalently, any) point of $\Lambda$ into the origin.
\end{enumerate}

Normal polytopes are central objects of study in toric algebraic geometry and combinatorial commutative algebra \cite{Kripo}. 

The simplest normal polytopes are \emph{unimodular} simplices, i.e, simplices of the form $\conv(x_1,\ldots,x_k)$, where $\{x_j-x_i\}_{j\not=i}$ is a part of a basis of the lattice of reference for some (equivalently, any) $1\le i\le k$.

\medskip\noindent\emph{Cones:}
\begin{enumerate}[\rm$\centerdot$]
\item A \emph{cone} is an $\RR_+$-submodule $C\subset\RR^d$; our cones are always assumed to be \emph{finite, rational,} and \emph{pointed}, i.e., of the form $C=\sum_{i=1}^n\ZZ_+x_i$ for some $x_1,\ldots,x_n\in\ZZ^d$, where $C$ does not contain a positive-dimensional linear space;
\item For a cone $C\subset\RR^d$, the \emph{bottom} $\B(C)$ of $C$ is defined as the union of the compact facets of the polyhedron $\conv\big((C\cap\ZZ_+^d)\setminus\{0\}\big)\subset\RR^d$ (\cite[p. 74]{Kripo}); see Figure 1.
\item Two cones $C\subset\RR^d$ and $C'\subset\RR^{d'}$ are called \emph{lattice isomorphic} if the additive monoids $C\cap\ZZ^d$ and $C'\cap\ZZ^{d'}$ are isomoprhic; when $d=d'$ this is equivalent to the existence of an automorphism $\ZZ^d\to\ZZ^d$, inducing a linear isomorphism $C\to C'$;
\item For a cone $C\subset\RR^d$, the smallest generating set of the monoid $C\cap\ZZ^d$ is the set of indecomposable elements. It is called the \emph{Hilbert basis} of $C$, denoted by $\Hilb(C)$ and known to be finite by the \emph{Gordan Lemma} \cite[Section 2.C]{Kripo}. 

Observe, a lattice polytope $P\subset\ZZ^d$ is normal if and only if $\Hilb(\RR_+(P,1))=\{(x,1)\ |\ x\in P\cap\ZZ^d\}$.
\end{enumerate}

\begin{figure}[h!]
\caption{Bottom of a cone}
\vspace{.25in}
\includegraphics[scale=8]{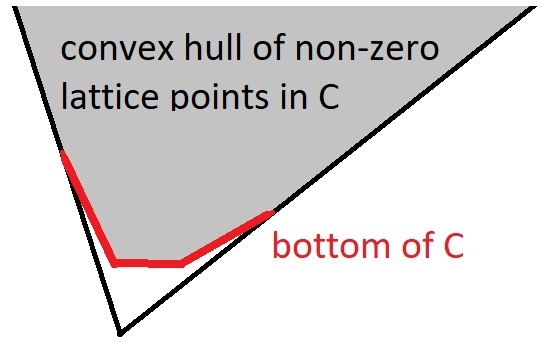}
\end{figure}

\medskip\noindent\emph{Affine monoids:}
\begin{enumerate}[\rm$\centerdot$]
\item
An \emph{affine monoid} is a finitely generated additive submonoid of a free abelian group; an affine monoid is \emph{positive} if it has no nontrivial subgroup. The monoids of the form $C\cap\ZZ^d$, where $C\subset\RR^d$ is a cone, are affine positive; conversely, an affine positive monoid $M$ is isomoprhic to $C\cap\ZZ^d$ for some cone $C\subset\RR^d$ if it is \emph{normal}, i.e., $z\in M$ whenever $z\in\gp(M)$ and $kz\in M$ for some $k\in\NN$, where $\gp(-)$ refers to the unversal group of differences. See \cite[Section 2]{Kripo} for generalities on affine monoids;
\item For a field $\kk$ and an affine monoid $M$ we think of the monoid algebra $\kk[M]$ as the corresponding monomial subalgebra of the Laurant polynomial rings $\kk[\gp(M)]\cong\kk[X_1^{\pm1},\ldots,X_d^{\pm1}]$, where $d=\rank(M)$.
\end{enumerate}

\subsection{Polytopal complexes} A (finite) \emph{polytopal complex} consists of (i) a finite family $\PPi$ of sets, (ii) a family of polytopes $P_p$, $p\in\PPi$, and (iii) a family of bijections $\pi_P:P_p\to p$, satisfying the following conditions:
\begin{enumerate}[\rm$\centerdot$]
\item For each face $F\subset P_p$, there extsts $f\in\PPi$ with $\pi_p(f)=F$,
\item For all $p,q\in\PPi$, there exist faces $F\subset P_p$ and $G\subset P_g$, such that $p\cap q=\pi_p(F)=\pi_q(G)$ and $\pi_q^{-1}\circ\pi_p:F\to G$ is an affine isomorphism of polytopes.
\end{enumerate}

\medskip For simplicity of notation, we will identify the sets $p$ and the polytopes $P_p$ along the bijections $\pi_p$. 

The \emph{support} of a polytopal complex $\PPi$ is the topological space $|\PPi|=\cup_{\PPi}P$, the elements of $\PPi$ are called \emph{faces}, and the maximal faces are called \emph{facets} of $\PPi$.

A polytopal complex $\PPi$, for which $|\PPi|$ is a polytope $P\subset\RR^d$, is called a \emph{regular subdivision} of $P$ if there is a piece-wise affine function $f:P\to\RR$, called a \emph{support function} for this subdivision, satisfying the conditions:
\begin{enumerate}[\rm$\centerdot$]
\item It is \emph{convex}, i.e., $f(\lambda x+\mu y)\le\lambda f(x)+\mu f(y)$ for all $x,y\in P$, $\lambda,\mu\in\RR_+$, $\lambda+\mu=1$, 
\item The facets of $\PPi$ are the maximal subsets of $P$ on which $f$ restricts to an affine map.
\end{enumerate}

For generalities on regular subdivisions of polytopes or, more generally, polytopal complexes, and their support functions, see \cite[Section 1]{Kripo}.

\medskip  A \emph{lattice polytopal complex} is a pair $(\PPi,{\mathbb\Lambda})$, where $\PPi$ is a polytopal complex and  $\mathbb\Lambda=\{\Lambda_P\subset\Aff(P)\ | P\in\PPi\}$ is a family of affine lattices, such that the following conditions are satisfied:
\begin{enumerate}[$\centerdot$]
\item Every polytope $P\in\PPi$ is a $\Lambda_P$-polytope,
\item $\Lambda_P\cap\Aff(P\cap Q)=\Lambda_Q\cap\Aff(P\cap Q)$ for all $P,Q\in\PPi$.
\end{enumerate}

\medskip Two lattice polytopal complexes $(\PPi,\mathbb\Lambda)$ and $(\PPi',\mathbb\Lambda')$ are \emph{isomorphic} if there is a bijection $\phi:|\PPi|\to|\PPi'|$, such that, for every $P\in\PPi$:
\begin{enumerate}[\rm$\centerdot$]
\item There exists $P'\in\PPi'$, for which $\phi|_P:P\to P'$ is an affine isomorphism, mapping $P\cap\Lambda_P$ bijectively to $P'\cap\Lambda_{P'}$,
\item By affine extension to $\Aff(P)$, the map $\phi$ induces an isomorphism $\Lambda_P\to\Lambda_{\phi(P)}$.
\end{enumerate}

We will use the notation $\phi:\PPi_1\to\PPi_2$.

\medskip A lattice complex $(\PPi,\mathbb\Lambda)$ is \emph{Euclidean} if there is an embedding $\iota:|\PPi|\to\RR^d$, such that, for every $P\in\PPi$, the restriction $\iota|_P:P\to\RR^d$ is an affine map, whose extension to $\Aff(P)$ maps $\Lambda_P$ isomorphically to $\Aff(\iota(P))\cap\ZZ^d$.

We will use the notation $\iota:\PPi\to\RR^d$.

\medskip Many examples of non-Euclidean lattice polytopal complexes, featuring various properties, are considered in \cite{Plycom}.

\medskip\noindent\emph{Convention.} If the lattice structure is clear from the context, $\mathbb\Lambda$ will be dropped from the notation.

\subsection{Polyhedral algebras}\label{Polyhedral} Let $H$ be an affine space in a vector space $V$ and $\Lambda\subset H$ be an affine lattice. To a $\Lambda$-polytope $P\subset H$ and a field $\kk$ we associate the \emph{polytopal ring} $\kk[P]$ with generators the points in $\Lambda\cap P$, subject to the binomial relations that reflect the affine dependences between these points. Alternatively, we have the monoid algebra realization $\kk[P]=\kk[M(P)]$ for the additive submonoid
$$
M(P)=\sum_{x\in P\cap\Lambda}\ZZ_+(x,1)\subset V\oplus\RR.
$$
This is a homogeneous graded algebra:
$$
\kk[P]=\kk\oplus A_1\oplus A_2\oplus\cdots=\kk[A_1],\qquad A_1=\sum_{x\in P\cap\Lambda}\kk\cdot (x,1).
$$

Observe that a lattice polytope $P\subset\RR^d$ is normal if and only if the cone $\spec(\kk[P])$ over the projective embedding $\proj(\kk[P])\to\PP_{\kk}^{N-1}$, $N=\#(P\cap\ZZ^d)$, is normal.

\medskip For a $\Lambda$-polytope $P$ and a face $F\subset P$, the \emph{face projection} $f_{PF}:\kk[P]\to\kk[F]$ is the $\kk$-algebra homomorphism, defined by
\begin{align*}
f_{PF}:(x,1)\mapsto
\begin{cases}
(x,1),\ \text{if}\ x\in F\cap\Lambda,\\
0,\ \text{if}\ x\in(P\cap\Lambda)\setminus F.
\end{cases}
\end{align*}

For a lattice polytopal complex $\PPi$, the \emph{polyhedral algebra} $\kk[\PPi]$ is defined by
\begin{align*}
\kk[\PPi]=\lim_{\longleftarrow}\big(f_{PF}:\kk[P]\to\kk[F]\ \big|\ F\subset P\ \text{a face}\big).
\end{align*} 

\medskip\noindent\emph{Notice.} The Stanley-Reisner ring of a simplicial compelx $\Delta$ is the same as the polyhedral algebra $\kk[\Delta]$, where $\Delta$ carries the coarsest lattice structure, i.e., when the only lattice points are the vertices of the simplcices.

\medskip For a lattice polytopal complex $\PPi$ and a face $P\in\PPi$, the map $M(P)\to\kk[\PPi]$ is an embedding and so we can think of $M(P)$ as a multiplicative submonoid of $\kk[\PPi]$. This way $\kk[P]$ becomes a sub-algebra of $\kk[\PPi]$. 

\medskip The elements of $\kk[\PPi]$ of the form $am$, where $a\in\kk$ and $m\in M(P)$ for some $P\in\PPi$, will be called \emph{monomials}.

Similarly to the case of a single polytope, $\kk[\PPi]$ is a homogeneous graded algebra:
$$
\kk[\PPi]=\kk\oplus A_1\oplus A_2\oplus\cdots=\kk[A_1],
$$
where $A_1$ is the $\kk$-span of the monomials of degree 1. In particular, $\kk[\PPi]$ comes equipped with the natural augmentation $\kk[\PPi]\to\kk$.

The algebra $\kk[\PPi]$ is reduced and, as a $\kk$-vector space, it equals the inductive limit of the diagram of face embeddings:
\begin{align*}
\kk[\PPi]=\lim_{\longrightarrow}\big(\kk[F]\hookrightarrow\kk[P]\ \big|\ F\subset P\ \text{a face}\big)
\end{align*}

Homological properties of polytopal algebras $\kk[\PPi]$ for $\PPi$ Euclidean are studied in \cite{Stanley}; the linear group of all graded automorphisms of the algebra $\kk[\PPi]$ for $\PPi$ not necessarily Euclidean are studied in \cite{Plycom}. 

As it turns out, an Euclidean lattice polytopal complex $\PPi$ is uniquely determined by its polyhedal algebra $\kk[\PPi]$; in particular, we recover the old result that a simplicial complex $\Delta$ is determined by its Stanley-Reisner ring $\kk[\Delta]$:

\begin{theorem}\label{isomorphism}
For a field $\kk$ and two Euclidean lattice polytopal complexes $\PPi,\PPi'$, the algebras $\kk[\PPi]$ and $\kk[\PPi']$ are isomorphic as augmented $\kk$-algebras if and only if $\PPi\cong\PPi\rq{}$.
\end{theorem}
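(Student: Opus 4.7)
The forward implication is immediate: an isomorphism $\phi:\PPi\to\PPi'$ restricts to affine isomorphisms $P\xrightarrow{\sim}\phi(P)$ matching the lattices, inducing isomorphisms $M(P)\xrightarrow{\sim}M(\phi(P))$ compatible with every face projection, and hence an isomorphism of the inverse limits that is graded and augmentation-preserving by construction. For the converse, let $\psi:\kk[\PPi]\xrightarrow{\sim}\kk[\PPi']$ be an isomorphism of augmented graded $\kk$-algebras. My plan is to reconstruct $\PPi$ from $\kk[\PPi]$ in three stages.

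\emph{Stage 1 (face poset).} The inverse limit description embeds $\kk[\PPi]$ into $\prod_{P\text{ a facet}}\kk[P]$ with kernels $\mathfrak{p}_P:=\ker\big(\kk[\PPi]\twoheadrightarrow\kk[P]\big)$ prime, since each $\kk[P]$ is a monoid-algebra domain. These are exactly the minimal primes of the reduced ring $\kk[\PPi]$, giving a canonical bijection between facets and minimal primes that is preserved by $\psi$. Extending to all faces via $\mathfrak{p}_Q=\ker(\kk[\PPi]\twoheadrightarrow\kk[Q])$ and observing that $\mathfrak{p}_P\subset\mathfrak{p}_Q$ iff $Q\subset P$, the entire face poset of $\PPi$ is recovered from the ring.

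\emph{Stage 2 (individual polytope).} For each face $P$, the quotient $\kk[\PPi]/\mathfrak{p}_P\cong\kk[P]$ is intrinsically determined. What remains is to show that $\kk[P]$, as a graded augmented $\kk$-algebra, determines $P$ up to unimodular equivalence. Here $\kk[P]$ is a positively graded toric domain, and the distinguished monomial lines $\{\kk\cdot(x,1):x\in P\cap\Lambda_P\}\subset A_1$ should be characterized intrinsically as the weight lines of a maximal algebraic torus inside the group of graded $\kk$-algebra automorphisms of $\kk[P]$; the existence, uniqueness up to conjugacy, and character description of such a torus for polytopal rings is the structural input I would borrow from the companion paper \cite{Plycom}. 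The binomial relations among products of these monomials then recover the affine-lattice configuration of $P\cap\Lambda_P$, and hence $P$ itself.

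\emph{Stage 3 (gluing).} For each face $P$, Stage 2 produces an affine-lattice isomorphism $\phi_P:P\xrightarrow{\sim}\psi(P)$; compatibility of $\psi$ with the face projections $f_{PQ}$ forces $\phi_P|_Q=\phi_Q$ whenever $Q\subset P$. These local maps glue to the required isomorphism $\phi:\PPi\to\PPi'$. The genuinely hard step is Stage 2, since $\kk[P]$ generically admits many non-monomial graded automorphisms, and isolating the lattice-point lines among all lines in $A_1$ forces us to invoke the detailed automorphism-group analysis of polytopal algebras from \cite{Plycom}; the Euclidean hypothesis on $\PPi$ is essential for that analysis to go through in the present setting.
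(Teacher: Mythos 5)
Your Stage 3 is where the argument breaks, and the gap is genuine. Stage 2, as you yourself note, rests on the maximal-torus analysis of \cite{Plycom}: for a single facet $P$, the diagonal torus in the graded automorphism group of $\kk[P]$ is maximal and therefore unique only \emph{up to conjugacy}, so the monomial lines in $\kk[P]$ are recovered from the induced isomorphism $\kk[P]\to\kk[\psi(P)]$ only after conjugating by some auxiliary graded automorphism $\gamma_P$, chosen independently for each facet. Graded non-monomial automorphisms of a polytopal ring do not act monomially on the face subalgebras, so the unimodular equivalences $\phi_P$ you extract need not satisfy $\phi_P|_{P\cap Q}=\phi_Q|_{P\cap Q}$; the assertion that ``compatibility of $\psi$ with the face projections forces $\phi_P|_Q=\phi_Q$'' is unsupported, precisely because $\psi$ itself is not monomial and each $\phi_P$ depends on its own conjugator. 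The paper's proof is designed to avoid exactly this: it runs the torus argument \emph{once, globally}. By \cite[Lemma 3.5(b)]{Plycom} the identity component $\mathbb T$ of the group of diagonal automorphisms of the whole polyhedral algebra $\kk[\PPi]$ is a maximal torus in the full graded automorphism group $\Gamma_\kk(\PPi)$; Borel's conjugacy theorem then supplies a \emph{single} $\gamma$ with $\mathbb T'=\gamma^{-1}\mathbb T\gamma$, and this one $\gamma$ carries all $\PPi'$-monomials to $\PPi$-monomials simultaneously (eigenvector characterization, \cite[Lemma 3.5(a)]{Plycom}). The resulting isomorphism of the monomial monoids modulo $\kk^*$ recovers $\PPi\cong\PPi'$ with all incidences matched automatically, so no facet-by-facet gluing problem ever arises.

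Two secondary lapses, both repairable but worth noting. First, you take $\psi$ to be a \emph{graded} augmented isomorphism, whereas the theorem only provides an augmented one; the paper fixes this by passing to the associated graded rings with respect to the augmentation ideals, using that $\gr(\kk[\PPi])=\kk[\PPi]$ canonically. Second, Borel's theorem and the torus analysis of \cite{Plycom} require $\kk$ algebraically closed, so one must first extend scalars to $\bar\kk$ (harmless, since the complex is recovered combinatorially from the monomial structure, but it must be said). Your Stage 1 is sound for facets, since minimal primes are intrinsic; but your extension to the full face poset also needs an intrinsic characterization of the non-minimal face primes $\mathfrak{p}_Q$ (for instance via sums of minimal primes of incident facets), which you assert rather than prove. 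These are minor next to the Stage 3 gap: the essential insight you are missing is that the torus/Borel mechanism must be applied to $\kk[\PPi]$ as a whole, not to the facet rings separately.
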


\begin{proof} (Compare with \cite[Proposition 5.26]{Kripo}, which yields the result for a single polytope.)
Assume $\kk[\PPi]\cong\kk[\PPi']$ as augmented algebras. By scalar extension, we can assume $\kk=\bar\kk$, the algebraic closure of $\kk$. Passing to the associated graded isomorphism with respect to the augmentation ideals, we derive a graded isomorphism $f:\kk[\PPi]=\gr(\kk[\PPi])\to\gr(\kk[\PPi'])=\kk[\PPi']$. We can identify $\kk[\PPi]$ and $\kk[\PPi']$ along $f$. Let $\mathbb T$ denote the unity component of the linear group of \emph{diagonal automorphisms} of $\kk[\PPi]$ with respect to $\PPi$, i.e., of the automorphisms, for which every $\PPi$-monomial is an eigenvector, and similarly for $\mathbb T'$. According to \cite[Lemma 3.5(b)]{Plycom}, both groups $\mathbb T$ and $\mathbb T'$ are maximal tori in the linear group $\Gamma_\kk(\PPi)$ of all graded atomorphisms of $\kk[\PPi]$. By Borel's Theorem on maximal tori \cite[Corollary 11.3]{Borel}, $\mathbb T$ and $\mathbb T'$ are conjugate in $\Gamma_k(\PPi)$. Assume $\mathbb T'=\gamma^{-1}\mathbb T\gamma$. Then the automorphism $\gamma:\kk[\PPi]\to\kk[\PPi]$ maps $\mathbb T'$-eigenvectors to $\mathbb T$-eigenvectors. But it follows from \cite[Lemma 3.5(a)]{Plycom} that $\mathbb T'$-eigenvectors are the $\PPi'$-\emph{monomials} and $\mathbb T$-eigenvectors are the $\PPi$-\emph{monomials}. In particular, the quotients of the multiplicative monoids of $\PPi$- and $\PPi'$-monomials by the multiplicative group $\kk^*$ are isomorphic. From this one easily derives $\PPi\cong\PPi'$.
\end{proof}

\section{Rees deformations and bottom complexes}\label{Commutative_algebra}

\subsection{Rees deformations of toric rings}
For a field $\kk$, a $\kk$-algebra $R$, and an ideal $I\subset R$, the \emph{Rees algebra} $(\mathcal R,I)=R[T,T^{-1}I]\subset R[T,T^{-1}]$ is flat over $\kk[T]$; moreover, the fiber of the map $\spec(\mathcal R,I)\to\AA^1_\kk$ at $T=a\in\kk\setminus\{0\}$ is isomorphic to $\spec(R)$, whereas the fiber at $T=0$ is isomorphic to $\spec(\gr_I(R))$ for the \emph{associated graded ring} $\gr_I(R)=R/I\oplus R/I^2\oplus\cdots$ (\cite[Section 6.5]{Eisenbud}). In particular, and especially when $\kk$ is algebraically closed, $\gr_I(R)$ can be regarded as a flat deformation of $R$.

Rees' deformations of rings of the form $\kk[C\cap\ZZ^d]$, where $C\subset\RR^d$ is a cone, with respect to the maximal monomial ideals have a nice description in terms of \emph{bottom complexes}. We denote by $\gr(\kk[C\cap\ZZ^d])$ the mentioned associated graded ring.

\begin{definition}\label{BOTTOMcomplexes}
\begin{enumerate}[\rm(a)] 
\item For a cone $C\subset\ZZ^d$, its \emph{bottom complex} $\BB(C)$ is the Euclidean lattice polytopal complex, whose facets are the maximal polytopes in $\B(C)$ and the affine lattices of reference are induced from $\ZZ^d$.
\item A lattice polytopal complex $\PPi$ is \emph{bottom} if there is a cone $C$, such that $\PPi\cong\BB(C)$. Such a cone $C$ is called a \emph{conic realization} of $\PPi$.
\item A lattice polytopal complex $\PPi$ is \emph{reduced bottom} if there is a cone $C\subset\RR^d$, such that $(\PPi,\mathbb\Lambda)\cong\BB(C)$ and, for every facet 
$F\in\BB(C)$, the subcone $\RR_+F\subset C$ satisfies $\Hilb(\RR_+F)\subset F$. Such $C$ is called a \emph{reduced conic realization} of $\PPi$.
\item Two (reduced) bottom realizations of a polytopal complex are called \emph{isomorphic} if the two cones are lattice isomorphic.
\end{enumerate}
\end{definition}

\medskip\noindent\emph{Notice.} The condition on the Hilbert bases in Definition \ref{BOTTOMcomplexes}(c) seems to be stronger than the containment $\Hilb(C)\subset\B(C)$, but we do not have an appropriate example.

\medskip A union of finitely many polytopes that is homeomorphic to a closed ball is called a \emph{polytopal ball}. We say that a polytopal $d$-ball $\B\subset\RR^{d+1}$ is \emph{convex towards 0} if the following conditions are satisfied:
\begin{enumerate}[\rm$\centerdot$]
\item $0\notin\B$ and the conical set $\RR_+\B\subset\RR^{d+1}$ is a $(d+1)$-cone, 
\item For every maximal polytope $P\subset\B$, the point $0$ and $\B\setminus P$ are in different open affine half-spaces, separated by $\Aff(P)$. (In particular, for every $x\in\B$, the equality $\RR_+x\cap\B=\{x\}$ holds.) 
\end{enumerate}

The proof of the following alternative description of bottom complexes, useful in the next sections, is straightforward:

\begin{lemma}\label{alternative}
Let $\PPi$ be a $d$-dimensional lattice polytopal complex.
\begin{enumerate}[\rm(a)]
\item $\PPi$ is bottom if and only if there exists an embedding $\iota:\PPi\to\RR^{d+1}$, such that:
\begin{enumerate}[\rm(i)]
\item $|\iota(\PPi)|)$ a polytopal $d$-ball, convex towards $0$,
\item For every facet $P\in\PP$, there are no lattice points in $\conv(0,|\iota(P)|)$ except $0$ and $|\iota(P)|\cap\ZZ^{d+1}$.
\end{enumerate}
\item $\PPi$ is reduced bottom if and only if there exists an embedding $\iota:\PPi\to\RR^{d+1}$, such that:
\begin{enumerate}[\rm(i)]
\item $|\iota(\PPi)|$ a polytopal $d$-ball, convex towards $0$,
\item For every facet $P\in\PPi$, the polytope $\iota(P)$ is normal,
\item For every facet $P\in\PPi$, the affine hyperplane $\Aff(\iota(P))$ is on lattice distance one from $0$ with respect to the standard lattice $\ZZ^d$.  
\end{enumerate}
\end{enumerate}
\end{lemma}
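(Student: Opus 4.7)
My plan is to prove (a) and (b) in parallel, since (b) differs from (a) only by an additional normality/lattice-distance condition on each facet. In both cases the central construction is the same: the ``only if'' direction uses the embedding $\iota$ coming from an isomorphism $\PPi\cong\BB(C)$, while the ``if'' direction turns the embedding $\iota$ into a cone $C:=\RR_+|\iota(\PPi)|$.

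For the ``only if'' direction of (a), starting from $\BB(C)\cong\PPi$, the support $|\iota(\PPi)|=\B(C)$ is the union of the compact facets of $Q:=\conv\big((C\cap\ZZ^{d+1})\setminus\{0\}\big)$. A cross-section of the pointed cone $C$ by an affine hyperplane separating $0$ from $\Hilb(C)$ is a $d$-polytope homeomorphic to $\B(C)$, which gives the $d$-ball property; each compact facet of $Q$ admits a supporting hyperplane that strictly separates $0$ from the rest of $Q$, yielding convexity towards $0$; and condition (ii) is a restatement of the fact that $\iota(P)$ is a facet of $Q$, since any lattice point of $\conv(0,\iota(P))$ strictly between $\Aff(\iota(P))$ and the parallel hyperplane through $0$ would prevent $\iota(P)$ from being a face of $Q$. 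Conversely, from $\iota$ satisfying (i)--(ii), I would set $C:=\RR_+|\iota(\PPi)|$: rationality is automatic from lattice vertices, and the two bullets defining ``convex towards $0$'' give pointedness of $C$ and ensure that each $\Aff(\iota(P))$ separates $0$ from $|\iota(\PPi)|\setminus\iota(P)$. Condition (ii) rules out lattice points of $Q$ strictly between $\Aff(\iota(P))$ and its parallel through $0$, so $\iota(P)$ is a compact facet of $Q$; varying $P$ over the facets of $\PPi$ exhausts all compact facets of $Q$, and the face identifications inherited from $C$ coincide with those of $\PPi$.

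The additional ingredient for (b) is the equivalence
\[
\Hilb(\RR_+\iota(P))\subset\iota(P)\;\Longleftrightarrow\;\iota(P)\text{ is normal and }\Aff(\iota(P))\text{ is on lattice distance one from }0,
\]
after which (b) follows from (a) by rewriting the Hilbert-basis condition in Definition~\ref{BOTTOMcomplexes}(c) as (ii)$+$(iii). To prove the equivalence, I would pick a linear height function evaluating to $1$ on $\Aff(\iota(P))$: if $\Aff(\iota(P))$ is on lattice distance one from $0$, every lattice point of $\RR_+\iota(P)$ sits at a positive integer height, with the height-$1$ layer equal to $\iota(P)\cap\ZZ^{d+1}$, so $\Hilb(\RR_+\iota(P))\subset\iota(P)$ becomes exactly the statement that every lattice point of $\RR_+\iota(P)$ decomposes as a sum of height-$1$ lattice points, which is normality of $\iota(P)$; conversely, a violation of lattice distance one produces a lattice point strictly between $0$ and $\Aff(\iota(P))$ that is indecomposable and therefore lies in $\Hilb(\RR_+\iota(P))\setminus\iota(P)$.

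The main place where care is needed is the converse of (a): one must verify that no ``extra'' compact facets of $Q$ appear beyond the $\iota(P)$'s and that the face identifications coming from $C$ match those prescribed by $\PPi$. Both points reduce to combining the second bullet of ``convex towards $0$'' with condition (ii)---the former makes each $\Aff(\iota(P))$ a strict supporting hyperplane for $Q$, while the latter pins the supported face down to $\iota(P)$---and the remainder is routine verification against the polytopal-complex axioms.
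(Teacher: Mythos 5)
The paper offers no written proof of this lemma (it is dismissed as ``straightforward''), so your proposal is judged on its own merits. Your treatment of part (a) is sound: both directions follow the intended route (bottom facets of $Q=\conv\big((C\cap\ZZ^{d+1})\setminus\{0\}\big)$ are the faces visible from $0$, and conversely $C:=\RR_+|\iota(\PPi)|$), and you correctly flag that in the converse one must combine condition (ii), applied to the facet containing the radial projection point, with the separation property of ``convex towards $0$'' to exclude lattice points of $C$ below any facet hyperplane. Likewise, in part (b) the direction ``distance one $+$ normal $\Rightarrow\Hilb(\RR_+\iota(P))\subset\iota(P)$'' via the primitive height function is correct.

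There is, however, a genuine gap in your converse for (b): the claim that ``a violation of lattice distance one produces a lattice point strictly between $0$ and $\Aff(\iota(P))$ that is indecomposable and therefore lies in $\Hilb(\RR_+\iota(P))\setminus\iota(P)$.'' Lattice distance one is a condition on \emph{all} of $\ZZ^{d+1}$, and the slab lattice point its failure provides need not lie in the cone $\RR_+\iota(P)$ — but only points of the cone can belong to its Hilbert basis. Concretely, take $F=\conv\big((1,0,2),(0,1,2),(1,1,2)\big)\subset\RR^3$: here $\Aff(F)=\{z=2\}$ is at lattice distance two from the origin, yet $\RR_+F$ contains \emph{no} lattice point of height $1$, so no lattice point of the cone lies strictly between the two hyperplanes and your claimed witness does not exist. (The equivalence itself survives: $(1,1,3)\in\RR_+F$ is indecomposable and outside $F$, so $\Hilb(\RR_+F)\not\subset F$ — but by a different mechanism.) The correct argument: write $\Aff(\iota(P))=\{\phi=c\}$ with $\phi$ a primitive integral form and $c\in\NN$; since $\RR_+\iota(P)$ is $(d+1)$-dimensional, $\gp\big(\RR_+\iota(P)\cap\ZZ^{d+1}\big)=\ZZ^{d+1}$, so if $\Hilb(\RR_+\iota(P))\subset\iota(P)$ then every lattice point of the cone has height in $c\ZZ$, forcing $\phi(\ZZ^{d+1})\subset c\ZZ$ and hence $c=1$ by primitivity; normality then follows by decomposing each height-$k$ lattice point into $k$ height-one points. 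With this repair your plan is complete.
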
 

\begin{theorem}\label{Folklore}
For a cone $C\subset\RR^d$, the ring $\kk[\BB(C)]$ embeds in $\gr(\kk[C\cap\ZZ^d])$ as a $\kk$-algebra retract and the kernel of the $\kk$-retraction $\gr(\kk[C\cap\ZZ^d])\to\kk[\BB(C)]$ is the nilradical.
\end{theorem}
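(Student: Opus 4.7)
The plan is to work explicitly with the height function on $M=C\cap\ZZ^d$. Let $R=\kk[M]$ and $I\subset R$ the maximal monomial ideal. For $v\in M$ put $h(v)=\max\{n : v=v_1+\cdots+v_n,\ v_i\in M\setminus\{0\}\}$ (with $h(0)=0$); this is finite because $C$ is pointed. One checks $I^n=\bigoplus_{h(v)\ge n}\kk\cdot x^v$ for $n\ge1$, so $\gr_I(R)$ has $\kk$-basis $\{1\}\cup\{[x^v]:v\in M\setminus\{0\}\}$ with $[x^v]$ in degree $h(v)$; multiplication is $[x^v]\cdot[x^w]=[x^{v+w}]$ when $h(v+w)=h(v)+h(w)$, and $0$ otherwise.

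The geometric key is this: any $v\in\B(C)\cap\ZZ^d$ has $h(v)=1$, so in any length-$h(v)$ decomposition $v=v_1+\cdots+v_{h(v)}$ every $v_i$ lies on $\B(C)$. Writing $t(v)$ for the rational support function of the bottom (so $\RR_+v\cap\B(C)=\{v/t(v)\}$), one obtains $h(v)\le t(v)$ with equality precisely when $v/h(v)\in\B(C)$; in the equality case the face property of convex sets forces every length-$h(v)$ decomposition of $v$ to have all its summands on the facet $F$ of $\BB(C)$ containing $v/h(v)$. Conversely, $(v,n)\in M(F)$ automatically yields $h(v)=n$ since $\phi_F(v)=n$ while $h(v)\le t(v)\le\phi_F(v)$.

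Using this, define $\pi\colon\gr_I(R)\to\kk[\BB(C)]$ on basis by $[x^v]\mapsto(v,h(v))\in M(F)\subset\kk[F]$ if $v/h(v)$ lies on some facet $F$, and $0$ otherwise; well-definedness across facets meeting at $v/h(v)$ follows from the polytopal-complex identity $M(F_1)\cap M(F_2)=M(F_1\cap F_2)$. Dually, the facet maps $\iota_F\colon\kk[F]\to\gr_I(R)$, $(v,n)\mapsto[x^v]$, land in the correct graded piece and agree on common faces, hence assemble into $\iota\colon\kk[\BB(C)]\to\gr_I(R)$. Both $\pi$ and $\iota$ are $\kk$-algebra homomorphisms by a short case analysis on the product rule: if the max decompositions of $v$ and $w$ lie on distinct facets, the point $(v+w)/(h(v)+h(w))$ sits strictly above $\B(C)$, giving $h(v+w)>h(v)+h(w)$ and making both product rules vanish. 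The identity $\pi\circ\iota=\operatorname{id}$ is immediate, so $\iota$ embeds $\kk[\BB(C)]$ as a retract of $\gr_I(R)$.

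For the nilradical assertion, $\ker\pi\supseteq\sqrt{0}$ holds because $\kk[\BB(C)]$ is reduced. Conversely, each basis element $[x^v]\in\ker\pi$ has $h(v)<t(v)$. Choose the facet $F$ of $\B(C)$ with $v/t(v)\in F$ and write $v/t(v)=\sum\lambda_iw_i$ with $w_i$ the lattice vertices of $F$ and $\lambda_i\in\QQ_{\ge0}$ summing to $1$; for $j$ sufficiently divisible the numbers $jt(v)\lambda_i$ are all nonnegative integers, so $jv=\sum(jt(v)\lambda_i)w_i$ is a length-$jt(v)$ integer decomposition of $jv$. Since $jt(v)>j\cdot h(v)$, super-additivity of $h$ forces $[x^v]^{j^*}=0$ at the smallest index $j^*\le j$ where $h(j^*v)>j^*h(v)$, so $[x^v]$ is nilpotent. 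The hardest step is this final denominator-clearing argument, which converts the purely convex-geometric datum $h(v)<t(v)$ into a genuine integer decomposition demonstrating nilpotency; once this is established, the remaining verifications that $\pi$ and $\iota$ are ring maps and that $\pi\circ\iota=\operatorname{id}$ are bookkeeping.
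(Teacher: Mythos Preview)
Your proof is correct and follows essentially the same strategy as the paper: both use the maximal-decomposition-length function (your $h$, the paper's $\l$) to describe the monomial $\kk$-basis and multiplication rule of $\gr_I(R)$, identify the non-nilpotent monomials as exactly those with $h(v)=t(v)$, and thereby match $\gr_I(R)_{\text{red}}$ with $\kk[\BB(C)]$. The paper treats the single-facet case first and then invokes the ``basic grading'' $\deg_F$ to embed each $\kk[F]$ separately, saying the general case ``reduces'' to this; you instead construct $\pi$ and $\iota$ globally, which is more explicit. Your denominator-clearing nilpotency argument is also spelled out more carefully than the paper's one-line ``there exists $t$ with $m^t\in\kk[\BB(C)]$''.

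One small point worth tightening: the inference ``$(v+w)/(h(v)+h(w))$ sits strictly above $\B(C)$, giving $h(v+w)>h(v)+h(w)$'' is not justified as written---that geometric fact only yields $t(v+w)>h(v)+h(w)$. This does \emph{not} affect your verification that $\pi$ is multiplicative: if $h(v+w)=h(v)+h(w)$ while $v/h(v)$ and $w/h(w)$ share no facet, then $h(v+w)<t(v+w)$, so $\pi([x^{v+w}])=0$ anyway and both sides vanish. It does, however, leave the multiplicativity of $\iota$ short one step (you need the product $[x^v]\cdot[x^w]$ itself to vanish in $\gr_I(R)$, not merely to be nilpotent). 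The paper is equally terse at precisely this juncture, so you are not behind it; but if you want a fully self-contained argument for the \emph{embedding} (as opposed to the isomorphism $\gr_I(R)_{\text{red}}\cong\kk[\BB(C)]$, which you have established), that step deserves another sentence.
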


\begin{proof}
First we consider the case when $\BB(C)$ is (the boundary complex of) a single $(d-1)$-dimensional lattice polytope $P$. The $c$-th graded component $I^c/I^{c+1}\subset\gr(\kk[C\cap\ZZ^d])$ is the $\kk$-linear span of the monomials $m\in C\cap\ZZ^d$, which are products of $c$ monomials of positive degree under the grading
$$
\kk[C\cap\ZZ^d]=\kk\oplus B_1\oplus B_2\oplus\cdots,\qquad B_c=\sum_{x\in C\cap(\ZZ^{d-1},c)}\kk\cdot x,
$$
but cannot be written as products of more than $c$ monomials of positive degree. We denote the corresponding degrees by $\deg(-)$. In particular, we have the identification of $\kk$-vector spaces $\kk[C\cap\ZZ^d]=\gr(\kk[C\cap\ZZ^d])$. Also, we have the sub-algebra 
$$
\kk[P]\cong\kk[\BB(C)]=\kk[B_1]\subset\kk[C\cap\ZZ^d].
$$ 

The multiplicative structures of $\gr(\kk[C\cap\ZZ^d])$ is described as follows. For every element $m\in (C\cap\ZZ^d)\setminus\{0\}$, let $\l(m)$ denote the maximal decomposition length of $m$ in the monoid $C\cap\ZZ^d$. Equivalently, $\l(m)$ is the degree of the corresponding element in the graded ring
$\gr(\kk[C\cap\ZZ^d])$.

For two elements $m_1,m_2\in(C\cap\ZZ^d)\setminus\{0\}$, their product $m_1\cdot m_2$ in $\gr(\kk[C\cap\ZZ^d])$ is given by 
\begin{align*}
m_1\cdot m_2=
\begin{cases}
m_1m_2\in\kk[C\cap\ZZ^d]\ \text{if}\ \l(m_1m_2)=\l(m_1)+\l(m_2),\\
0\ \text{if}\ \l(m_1m_2)>\l(m_1)+\l(m_2).\\
\end{cases}
\end{align*}

Observe that, for every element $m\in(C\cap\ZZ^d)\setminus\{0\}$, we have $\l(m)\le\deg(m)$ with the equality if and only if $m\in\kk[P]$. Simultaneously, there exists $t\in\NN$, such that $m^t\in\kk[\BB(C)]$. This implies that, on the one hand, the product of any system of monomials in $\kk[\BB(C)]$ is the same as that of these monomials in $\gr(\kk[C\cap\ZZ^d])$ and, on the other hand, every monomial in $\kk[C\cap\ZZ^d]\setminus\kk[\BB(C)]$ represents a nilpotent element in $\gr(\kk[C\cap\ZZ^d])$. This proves the special case of Theorem \ref{Folklore} when $\BB(C)$ has a single facet.

Next we observe, that for a general cone $C$, the similar identification of $\kk$-vector spaces $\kk[C\cap\ZZ^d]=\gr(\kk(C\cap\ZZ^d])$ still makes sense. On the other hand, for a facet $F\in\BB(C)$, there is a grading
$$
\kk[C\cap\ZZ^d]=\kk\oplus B_1\oplus B_2\oplus\cdots,
$$
called the \emph{basic grading} with respect to $F$ in \cite[p. 74]{Kripo}, making the monomials homogeneous and such that the resulting degree $\deg_F(-)$ satisfies $\deg_F(m_1)<\deg_F(m_2)$ for any elements  $m_1\in F\cap\ZZ^d$ and $m_2\in(C\cap\ZZ^d)\setminus\RR_+F$. This grading implies that no nonzero element $m\in \sum_{x\in F\cap\ZZ^d}\ZZ_+x$ can be decomposed within the bigger monoid $C\cap\ZZ^d$ into more than $\deg_F(m)$ elements. In particular, the identity embedding $\kk[F]\to\gr(\kk[C\cap\ZZ^d])$ respects the multiplicative structure, i.e., $\kk[F]$ is a subalgebra of $\gr(\kk[C\cap\ZZ^d])$ and the general case reduces to the case when $\BB(C)$ has only one facet.
\end{proof}

Theorems \ref{isomorphism} and \ref{Folklore} have the following

\begin{corollary}\label{bottom_algebra}
\begin{enumerate}[\rm(a)]
\item A $d$-dimensional lattice polytopal complex $\PPi$ is bottom if an only if there is a cone $C\subset\RR^{d+1}$, such that $\kk[\PPi]\cong\gr(\kk[C\cap\ZZ^d])_{\text{\emph{red}}}$ as augmented $\kk$-algebras.
\item
A $d$-dimensional lattice polytopal complex $\PPi$ is reduced bottom if and only if there is a cone $C\subset\RR^{d+1}$, such that $\kk[\PPi]\cong\gr(\kk[C\cap\ZZ^d])$ as augmented $\kk$-algebras. 
\end{enumerate}
\end{corollary}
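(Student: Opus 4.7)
The plan is to derive Corollary \ref{bottom_algebra} as a direct synthesis of Theorems \ref{isomorphism} and \ref{Folklore}, treating both parts symmetrically and distinguishing them only by the reducedness statement.

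For part (a), the forward direction is immediate: if $\PPi$ is bottom then $\PPi\cong\BB(C)$ for some cone $C\subset\RR^{d+1}$, so $\kk[\PPi]\cong\kk[\BB(C)]$ as augmented algebras, and Theorem \ref{Folklore} identifies $\kk[\BB(C)]$ with $\gr(\kk[C\cap\ZZ^d])_{\text{red}}$ because the cited retraction has nilpotent kernel and $\kk[\BB(C)]$ is reduced. For the reverse direction, start from $\kk[\PPi]\cong\gr(\kk[C\cap\ZZ^d])_{\text{red}}\cong\kk[\BB(C)]$ (again by Theorem \ref{Folklore}), note that $\BB(C)$ is Euclidean by Definition \ref{BOTTOMcomplexes}(a), and apply Theorem \ref{isomorphism} to conclude $\PPi\cong\BB(C)$. (The hypothesis that $\PPi$ is Euclidean is needed here in order to invoke Theorem \ref{isomorphism}; this is the standing assumption throughout the polytopal-complex setting, and under a bottom conclusion it is in any case automatic.)

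Part (b) is handled by the same mechanism once one verifies that $\gr(\kk[C\cap\ZZ^d])$ is itself reduced exactly when $C$ is a \emph{reduced} conic realization. For the forward direction, assuming $C$ is a reduced conic realization of $\PPi$, I would show reducedness of $\gr(\kk[C\cap\ZZ^d])$ and then invoke Theorem \ref{Folklore} to obtain $\gr(\kk[C\cap\ZZ^d])=\gr(\kk[C\cap\ZZ^d])_{\text{red}}\cong\kk[\BB(C)]\cong\kk[\PPi]$. From the proof of Theorem \ref{Folklore}, the nilpotents of $\gr(\kk[C\cap\ZZ^d])$ are spanned by the monomials $m\in (C\cap\ZZ^d)\setminus\{0\}$ that do not belong to $\kk[\BB(C)]$; equivalently, by monomials $m\in\RR_+F\cap\ZZ^d$ (for some facet $F\in\BB(C)$) that fail to admit a decomposition into lattice points of $F$. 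The reduced-bottom hypothesis $\Hilb(\RR_+F)\subset F$ forces every such $m$ to decompose in $F$, so $\kk[C\cap\ZZ^d]=\kk[\BB(C)]$ as sets of monomials and $\gr$ is reduced. For the reverse direction, the algebra $\kk[\PPi]$ is reduced (noted in Section \ref{Polyhedral}), so $\gr(\kk[C\cap\ZZ^d])$ is reduced; by Theorem \ref{Folklore} this forces $\kk[\PPi]\cong\kk[\BB(C)]$, Theorem \ref{isomorphism} gives $\PPi\cong\BB(C)$, and the above equivalence upgrades this to the reduced-bottom property.

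The main obstacle is the translation between the algebraic property ``$\gr(\kk[C\cap\ZZ^d])$ is reduced'' and the geometric property ``$\Hilb(\RR_+F)\subset F$ for every facet $F\in\BB(C)$.'' The proof of Theorem \ref{Folklore} already exposes the correspondence ``nilpotents $\longleftrightarrow$ monomials outside $\kk[\BB(C)]$'' via the basic grading with respect to $F$, and Lemma \ref{alternative}(b)(ii)(iii) reformulates the Hilbert basis containment as normality of $F$ together with the lattice-distance-one condition. Extracting the equivalence crisply from these two ingredients is the one step that requires genuine combinatorial bookkeeping; everything else is formal reassembly.
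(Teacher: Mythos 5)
Your proposal is correct and is essentially the paper's own argument: the paper states Corollary \ref{bottom_algebra} with no written proof beyond the citation of Theorems \ref{isomorphism} and \ref{Folklore}, and your derivation is exactly the intended synthesis, including the one nontrivial step the paper leaves implicit --- that $\gr(\kk[C\cap\ZZ^{d+1}])$ is reduced precisely when $\Hilb(\RR_+F)\subset F$ for every facet $F\in\BB(C)$, which you correctly extract from the basic-grading and nilpotency analysis in the proof of Theorem \ref{Folklore}. Your parenthetical caveat about the Euclidean hypothesis in Theorem \ref{isomorphism} matches a looseness already present in the paper itself, so it is not a gap relative to the paper's proof.
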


\section{Gluing bottom complexes}\label{REALIZATION}

\begin{lemma}[{\bf Cone gluing}]\label{Patching}
Let $C_i\subset\RR^d$ be $d$-cones and $F_i\subset C_i$ be facets for $i=1,2$. Assume $F_1$ and $F_2$ are lattice isomorphic. Then there exists a cone $C\subset\RR^d$ and a rational linear form $h:\RR^d\to\RR$, together with lattice isomorphisms:
\begin{align*}
C^0\cong F_1\cong F_2,\quad C_1\cong C^+,\quad C_2\cong C^-,
\end{align*}
where $C^0=\{x\in C\ |\ h(x)=0\}$, $C^+=\{x\in C\ |\ h(x)\ge0\}$, and $C^-=\{x\in C\ |\ h(x)\le0\}$.
\end{lemma}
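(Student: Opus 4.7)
The plan is to realize $C$ as a convex union of embedded copies of $C_1$ and $C_2$ lying on opposite sides of a hyperplane, after applying suitable shears to align them.

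First, by standard unimodular arguments, I would embed $C_1\subset\RR^d$ with $F_1\subset\{x_d=0\}$ and $C_1\subset\{x_d\ge 0\}$, and similarly $C_2\subset\{x_d\le 0\}$ with $F_2\subset\{x_d=0\}$. Using the given lattice isomorphism $F_1\cong F_2$ together with the $\operatorname{GL}_{d-1}(\ZZ)$-action on $\{x_d=0\}$, arrange that $F_1=F_2=:F$ as a single $(d-1)$-dimensional rational pointed cone inside $\{x_d=0\}$.

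The core step is to apply integer shears $T_u(x_1,\ldots,x_{d-1},x_d)=(x_1+u_1x_d,\ldots,x_{d-1}+u_{d-1}x_d,\,x_d)$ for $u\in\ZZ^{d-1}$; each is a unimodular automorphism fixing $\{x_d=0\}$ pointwise and preserving each closed half-space. Writing $\pi$ for the projection killing $e_d$, a primitive ray generator $w$ of $C_1$ outside $F_1$ has $w_d\ge 1$ and $\pi(T_uw)=\pi(w)+w_d\cdot u$. Since $F$ is full-dimensional in $\{x_d=0\}$, its interior $\inte(F)$ is a nonempty open cone containing lattice points arbitrarily far from $\partial F$; by taking $u=Nu_0$ for some $u_0\in\inte(F)\cap\ZZ^{d-1}$ and $N$ large enough, $\pi(T_uw)\in\inte(F)$ simultaneously for every out-of-$F_1$ ray generator $w$. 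Apply such a shear $T_1$ to $C_1$ and, by symmetry, a shear $T_2$ with vector in $-\inte(F)$ to $C_2$; then set $C:=T_1(C_1)\cup T_2(C_2)$ and $h:=x_d$.

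Then I would verify the four required properties. \emph{Convexity}: for $x\in T_1C_1$ and $y\in T_2C_2$, the segment $[x,y]$ crosses $\{x_d=0\}$ at a point $z$ whose projection is a convex combination of $\pi(x),\pi(y)\in F$, so $z\in F\subset T_1C_1\cap T_2C_2$ and both half-segments stay inside the respective convex cones. \emph{Finite rationality}: inherited from the $T_iC_i$. \emph{Pointedness}: if $v,-v\in C$ with $v\ne 0$, symmetry reduces to $v\in T_1C_1$ and $-v\in T_2C_2$; then $\pi(v),-\pi(v)\in F$ and pointedness of $F$ force $\pi(v)=0$, so $v=v_de_d$; expanding $v$ in the ray generators of $T_1C_1$ and projecting once more, the fact that the out-of-$F_1$ generators project into $\inte(F)$ (hence are nonzero in $F$) together with $F\cap(-F)=\{0\}$ kills every coefficient, giving $v=0$. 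Finally, the lattice isomorphisms $C^0\cong F\cong F_1\cong F_2$ and $C^+\cong C_1$, $C^-\cong C_2$ follow since the $T_i$ are unimodular and $C^{\pm}=T_{1,2}(C_{1,2})$ by construction.

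The main obstacle is the shear-existence step: producing a single integer vector whose corresponding shear pushes every out-of-face ray generator's projection deep into $\inte(F)$ simultaneously. This rests on the elementary but crucial fact that a rational full-dimensional pointed cone admits lattice points arbitrarily far inside its interior, so that a sufficiently deep $u$ absorbs the finitely many bounded translates $\pi(w)/w_d$ at once; once this is in hand, both convexity and the delicate pointedness argument follow directly.
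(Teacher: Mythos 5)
Your proposal is correct and follows essentially the same route as the paper's proof: after aligning the two facets by a unimodular map (with a flip to put the cones on opposite sides), your integer shears $T_{Nu_0}$ and $T_{-Nu_0}$ toward a lattice point $u_0\in\inte(F)$ are exactly the paper's automorphisms $\alpha_t,\beta_t$ built from the bases $\{u,w_2,\ldots,w_d\}$, $\{v,w_2,\ldots,w_d\}$ with $\gamma\in\inte(F_2)\cap\ZZ^d$, and your convexity argument (any cross-hyperplane segment meets $F$) is the same mechanism as the paper's observation that segments between sheared ray generators meet $\inte(F_2)$ for $t$ large. Your explicit verification of pointedness is a small bonus the paper leaves implicit, but it does not constitute a different approach.
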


\begin{proof}
Pick an isomorphism $F_1\cap\ZZ^d\to F_2\cap\ZZ^d$ and extend it to an $\RR$-automorphism $\theta:\RR^d\to\RR^d$, which maps $\ZZ^d$ bijectively to itself and maps the cones $C_1$ and $C_2$ to the opposite sides relative to the hyperplane $\RR F_2$. Put $D_i=\theta(C_i)$, $i=1,2$. We have:

\begin{enumerate}[\rm$\centerdot$]
\item $D_1$ and $D_2$ share the facet $F_2$ and they are in the opposite sides relative to $\RR F_2$,
\item $D_i$ and $C_i$ are lattice isomorphic, $i=1,2$.
\end{enumerate}

If the subset $D_1\cup D_2\subset\RR^d$ is a cone, which is equivalent to $D_1\cup D_2$ being convex, then the cone $C=D_1\cup D_2$ satisfies the desired condition. In general, $D_1\cup D_2$ is not a cone. We fix this as follows. Pick two bases of $\ZZ^d$ of the form 
\begin{align*}
\mathcal B_1=\{u,w_2,\ldots,w_d\},\qquad\mathcal B_2=\{v,w_2,\ldots,w_d\},
\end{align*}
where
\begin{enumerate}[\rm$\centerdot$]
\item $\{w_2,\ldots,w_d\}\subset\RR F_2$,
\item $u$ is on the same side relative to $\RR F_2$ as $D_1$,
\item $v$ is on the same side relative to $\RR F_2$ as $D_2$.  
\end{enumerate}

Pick an element $\gamma\in\big(\inte(F_2)\cap\ZZ^d\big)\setminus\{0\}$. For a natural number $t$, consider the following bases of $\ZZ^d$:
\begin{align*}
\mathcal B_1(t)=\{u+t\gamma,w_2,\ldots,w_d\},\qquad \mathcal B_2(t)=\{v+t\gamma,w_2,\ldots,w_d\}
\end{align*}
and the automorphisms:
\begin{tabbing}
\hspace{2.8in}\= \hspace{0in}\kill
$\alpha_t:\ZZ^d\to\ZZ^d$, \>  $\beta_t:\ZZ^d\to\ZZ^d$,\\
\quad\ $\lambda_1u+\lambda_2w_2+\cdots+\lambda_dw_d\ \mapsto$ \>\ \quad$\mu_1v+\mu_2w_2+\cdots+\mu_dw_d\ \mapsto$\\
\quad\ $\lambda_1(u+t\gamma)+\lambda_2w_2+\cdots+\lambda_dw_d$, \> $\quad\ \mu_1(v+t\gamma)+\mu_2w_2+\cdots+\mu_dw_d$.
\end{tabbing}

\medskip\noindent\emph{Notice.} The maps $\alpha_t$ and $\beta_t$ are independent of the choices  of $u$ and $v$. In fact, if $u'$ and $v'$ satisfy the same conditions as $u$ and $v$, respectively, then an equality in $\ZZ^d$
\begin{align*}
\lambda_1 u+\lambda_2 w_2+\cdots+\lambda_d w_d=\lambda'_1 u'+\lambda'_2 w_2+\cdots+\lambda'_d w_d
\end{align*}
forces $\lambda_1=\lambda'_1$, and similarly for $v'$.

\medskip We claim that for $t$ large, the set $C_t=\alpha_t(D_1)\cup\beta_t(D_2)$ is a cone in $\RR^d$ which, along with the subcones
\begin{align*}
C^+_t:=\alpha_t(D_1),\qquad C^-_t:=\beta_t(D_2),
\end{align*}
and an appropriate linear map $h:\RR^d\to\RR$ with $\ker h=\RR F_2$ has the desired properties.

Let $\{x_1,\ldots,x_m\}$ and $\{y_1,\ldots,y_n\}$ be arbitrary non-zero vectors in $\ZZ^d$, representing the extremal rays of $D_1$ and $D_2$, respectively, that are not in $F_2$; we pick one vector per a ray. Also, let $\{z_1,\ldots,z_l\}$ be non-zero vectors in $\ZZ^d$, representing the shared extremal rays of $D_1$ and $D_2$, i.e., the extremal rays of $F_2$.

We have
\begin{align*}
&C^+_t=\RR_+\alpha_t(x_1)+\cdots+\RR_+\alpha_t(x_N)+\RR_+z_1+\cdots+\RR_+z_l,\\
&C^-_t=\RR_+\beta_t(y_1)+\cdots+\RR_+\beta_t(y_N)+\RR_+z_1+\cdots+\RR_+z_l.\\
\end{align*}

As $t\to\infty$, the radial directions of the points $\alpha_t(x_i)$ converges to that of $\gamma$. The same is true for the radial directions of $\beta_t(y_j)$. Since $\gamma\in\inte(F)$, for every pair of indices $i,j$ and a sufficiently large natural number $t$, the segment $[\alpha_t(x_i),\beta_t(y_j)]$ meets $\inte(F_2)$. But then, for any two non-zero points $x\in\RR_+\alpha_t(x_i)$ and $y\in\RR_+\beta(y_j)$, the segment $[x,y]$ meets $\inte(F_2)$. This is equivalent to $C_t$ being a cone.
\end{proof}

Assume two $d$-dimensional (reduced) bottom lattice complexes $\PPi_1$ and $\PPi_2$ admit (reduced) conic realizations $C_1,C_2\subset\RR^{d+1}$ and assume there is a lattice isomorphism $\theta:F_1\to F_2$ for some facets $F_1\subset C_1$ and $F_2\subset C_2$. Then, identifying the appropriate pairs of facets of $\PPi_1$ and $\PPi_2$ along $\theta$, we can define a new lattice polytopal complex -- the  \emph{conic gluing of $\PPi_1$ and $\PPi_2$ along $\theta$,} denoted by $\PPi_1\vee_\theta\PPi_2$, or just $\PPi_1\vee\PPi_2$ when $\theta$ is clear from  the context.

More systematically, first one chooses cones $D_1$ and $D_2$ as in the proof of  Lemma \ref{Patching}. Observe that, for every cone, the bottom complex of a face is a sub-complex of the bottom of the cone. In particular, the faces of $\PPi_1$, whose conic realizations in $D_1$ are in the common face $D_1\cap D_2$, coincide with the conic realizations in $D_2$ of the appropriate faces of $\PPi_2$. In other words, $\B(D_1)\cup\B(D_2)$ and the standard lattice $\ZZ^{d+1}\subset\RR^{d+1}$ define an embedding 
\begin{equation}\label{special_emb}
\PPi_1\vee_\theta\PPi_2\to\RR^{d+1}.
\end{equation}

Call a lattice polytopal complex $\PPi$ \emph{stacked} if its facets can be enumerated in such a way $P_1,\ldots,P_n$ that $P_i\cap(P_1\cup\ldots\cup P_{i-1})$ is a facet of $P_i$ for every $i=2,\ldots,n$.

\begin{figure}[h!]
\caption{Stacked polytopal complex}
\vspace{.25in}
\includegraphics[scale=8]{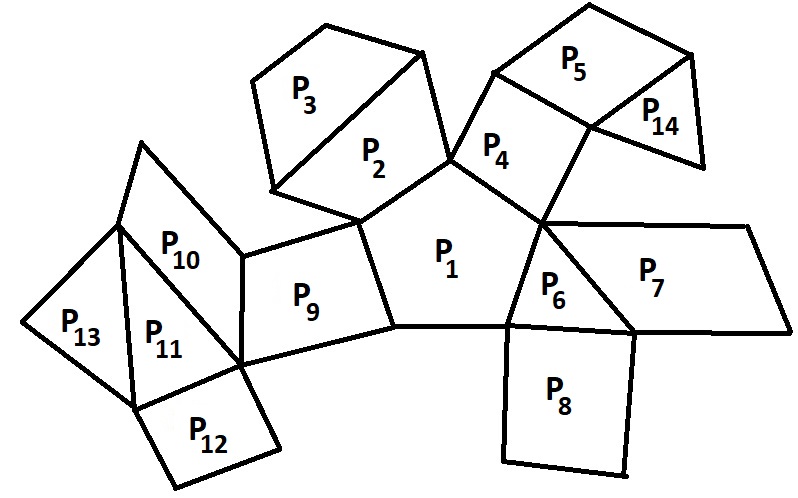}
\end{figure}

\begin{theorem}\label{MAIN}
\begin{enumerate}[\rm(a)]
\item
Assume $\PPi_1$ and $\PPi_2$ are (reduced) bottom complexes that can be conically glued. Then $\PPi_1\vee\PPi_2$ is (reduced) bottom.
\item
Every stacked lattice polytopal complex is bottom; it is reduced if and only if the facets of the complex are  normal polytopes with respect to the lattices of reference. 
\end{enumerate}
\end{theorem}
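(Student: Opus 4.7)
For part (a), my plan is to apply Lemma~\ref{Patching} to (reduced) conic realizations $C_1, C_2 \subset \RR^{d+1}$ of $\PPi_1$ and $\PPi_2$, using the lattice isomorphism $\theta$ between the chosen cone facets. The lemma produces cones $D_1, D_2 \subset \RR^{d+1}$, lattice isomorphic to $C_1, C_2$ respectively, sharing a common facet $C^0$ (lattice isomorphic to $F_1 \cong F_2$ under $\theta$), with $D_1 \cup D_2$ again a pointed rational cone. The embedding~\eqref{special_emb} has image $|\B(D_1)| \cup |\B(D_2)| \subset \RR^{d+1}$, and I verify that it satisfies the hypotheses of Lemma~\ref{alternative}(a) in the non-reduced case, or of Lemma~\ref{alternative}(b) in the reduced case, which then yields the conclusion. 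The topological condition holds because the support is the union of two polytopal $d$-balls meeting along a common $(d-1)$-subball on each boundary (namely $|\B(C^0)|$), hence is itself a polytopal $d$-ball; convexity toward $0$ is local to each facet and is inherited from the original realizations. The remaining conditions --- no extra lattice points in the non-reduced case; facet normality and lattice distance one from $0$ in the reduced case --- are likewise facet-local and descend from the corresponding conditions on $C_1, C_2$.

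For part (b), the plan is induction on the number $n$ of facets. The base case $n=1$ is standard: for a $\Lambda$-polytope $P$ of dimension $d$, the cone $C = \RR_+(P,1) \subset \RR^{d+1}$ has $\BB(C)$ consisting of the single facet $P$, and this realization is reduced precisely when $P$ is normal. For the inductive step, let $C'$ be a (reduced) conic realization of the subcomplex $\PPi' = P_1 \cup \cdots \cup P_{n-1}$. The polytope $P_n$ attaches along its facet $F := P_n \cap \PPi'$; since $F$ lies on the boundary of $\PPi'$, the cone $\RR_+F$ is a cone facet of $C'$, and $\RR_+(F,1)$ is a facet of the trivial realization $\RR_+(P_n,1)$ of $P_n$. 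Applying part (a) to these two realizations along their common facet $F$ produces a (reduced) conic realization of the stacked complex $\PPi' \cup P_n$. The converse implication in the second assertion of (b) is immediate from Lemma~\ref{alternative}(b)(ii): in any reduced bottom realization every facet must be a normal polytope.

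The hardest step, on which I would spend most of the argument, is confirming that the gluing produces an admissible embedding in the sense of Lemma~\ref{alternative}. The ball statement reduces to the classical topological fact that gluing two $d$-balls along a common $(d-1)$-subball on the boundary of each yields a $d$-ball. The facet-local conditions pose no genuine difficulty, since every facet of $\PPi_1 \vee_\theta \PPi_2$ is a facet of either $\PPi_1$ or $\PPi_2$, and the standard lattice $\ZZ^{d+1}$ agrees on $D_1$ and $D_2$ with the lattice structures transported from $C_1, C_2$ via the lattice isomorphisms provided by Lemma~\ref{Patching}.
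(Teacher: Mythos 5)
Your overall architecture matches the paper's proof --- part (a) via Lemma~\ref{Patching} plus Lemma~\ref{alternative}, part (b) by induction on facets using part (a), with the base case $\RR_+(P,1)$ and the converse from normality of facets --- but there is a genuine gap at exactly the step you dismiss as posing ``no genuine difficulty.'' Convexity towards $0$ is \emph{not} a facet-local condition: it requires, for every facet $P$ of the glued ball, that $\Aff(\iota(P))$ separate $0$ from \emph{all} of the rest of the ball, including the points coming from the other complex. Each of $\B(D_1)$ and $\B(D_2)$ is separately convex towards $0$ (this is preserved, since the maps in Lemma~\ref{Patching} are lattice-linear), but the cross-condition --- a facet of $\B(D_1)$ against points of $\B(D_2)$ --- can fail: the dihedral angle along the shared $(d-1)$-ball may open towards $0$, in which case lattice points of one bottom lie strictly below the affine hulls of facets of the other. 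When that happens, $\B(D_1)\cup\B(D_2)$ is not even the bottom of the glued cone: the convex hull of the nonzero lattice points cuts underneath the intended facets, so $\BB(D_1\cup D_2)\not\cong\PPi_1\vee_\theta\PPi_2$, and your verification of Lemma~\ref{alternative} cannot go through. Relatedly, you cite Lemma~\ref{Patching} as producing cones $D_1,D_2$ with $D_1\cup D_2$ a cone, but in the paper $D_1\cup D_2$ is generally \emph{not} convex; the cone is $\alpha_t(D_1)\cup\beta_t(D_2)$ for $t$ large, and this distinction is not pedantic --- the free parameter $t$ is precisely what repairs the gap.

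The paper's proof of Theorem~\ref{MAIN}(a) re-enters the construction inside the proof of Lemma~\ref{Patching} rather than using only its statement: since $\alpha_t$ and $\beta_t$ fix the common hyperplane $\RR F_2$ pointwise, the interface $\B(\alpha_t(D_1))\cap\B(\beta_t(D_2))$ stays fixed as $t\to\infty$, while both bottoms shear in the direction of $\gamma\in\inte(F_2)$ and thereby ``fold away from $0$,'' so that for $t\gg0$ the union ball becomes convex towards $0$ along the interface (the only place it can fail, since each side inherits the condition internally). To complete your argument you would need to add this quantitative step: an observation that the bare conclusion of Lemma~\ref{Patching} (for some fixed admissible $t$) is insufficient, followed by the limiting argument that a sufficiently large $t$ simultaneously makes the union a cone and the union of bottoms convex towards $0$. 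The facet-local conditions you list --- normality of facets, lattice distance one from $0$, absence of extra lattice points in $\conv(0,|\iota(P)|)$ --- do descend correctly, since $\alpha_t,\beta_t$ are automorphisms of $\ZZ^{d+1}$ fixing $0$; that part of your proposal, and all of part (b), is sound.
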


\begin{proof} (a) Assume $\PPi_1$ and $\PPi_2$ can be conically glued. We think of $\PPi_1\vee\PPi_2$ as an embedded lattice complex via $(\ref{special_emb})$. Using the notation above, in view of Lemma \ref{alternative}, one only needs to achieve that the polytopal $d$-ball $\B(D_1)\cup\B(D_2)\subset\RR^{d+1}$ is convex towards $0$. Using the notation in the proof of Lemma \ref{Patching}, this can be done by changing $D_1$ and $D_2$ to $\alpha_t(D_1)$ and $\beta_t(D_2)$, respectively, for $t$ sufficiently large. In fact, not only becomes the union $\alpha_t(D_1)\cup\beta(D_2)$ a cone as $t\to\infty$, but the polytopal $d$-ball $\B(\alpha_t(D_1))\cup\B(\beta_t(D_2))$ also becomes convex towards $0$ for $t$ large: the convexity towards $0$ may only fail along $\B(\alpha_t(D_1))\cap\B(\beta_t(D_2))$, but  as $t\to\infty$ the two polytopal balls fold away from $0$ leaving $\B(\alpha_t(D_1))\cap\B(\beta_t(D_2))$ fixed.

\medskip\noindent(b) This follows from the part (a) by induction on $n$. In fact, assume the facets of a stacked lattice complex $\PPi$ are $P_1,\ldots,P_n$, enumerated this way way. Let $\PPi_i\subset\PPi$ denote the lattice polytopal sub-complex with $|\PPi_i|=P_1\cup\ldots\cup P_i$. If $C_i$ is a conic realization of $\PPi_i$ for some $1\le i< n$ then the corresponding facet of $C_i$ is lattice isomorphic to the facet $\RR_+\big(P_{i+1}\cap(P_1\cup\ldots\cup P_i)\big)\subset\RR_+P_{i+1}$.
\end{proof}

\begin{remark}\label{Shelling}
Theorem \ref{MAIN}(a) leads to a much larger class of (reduced) bottom complexes than the class in Theorem \ref{MAIN}(b). Consider a sequence of (reduced) bottom complexes $\PPi_1,\ldots,\PPi_n$, such that, for every $1\le i<n$, the inductively defined complex $\vee_{k=1}^i\PPi_k$ and $\PPi_{i+1}$ can be conically glued. Then $\vee_{i=1}^n\PPi_i$ is also (reduced) bottom. The complex $\vee_{i=1}^n\PPi_i$ carries a shellable-like structure (\cite[Section 15]{Stacked}), which is more general than stacking polytopes inductively, reminiscent of the definition of a \emph{stacked polytope} (\cite[Section 19]{Stacked}). More precisely, many reduced bottom complexes, not covered by Theorem \ref{MAIN}(b), are described in Sections \ref{High_D} and \ref{Bottom_discs}. Incorporating such non-stacked building blocks in the conical gluing process, one produces a considerably larger class of reduced bottom complexes than the stacked ones.   
\end{remark}

\begin{remark}\label{1-dimensional}\label{One-dimensional} It follows from Theorem \ref{MAIN}(b) that every 1-dimensional lattice polytopal complex $\PPi$, whose support is homeomorphic to an interval, is reduced bottom. More precisely, if the lattice points of $\PPi$ are labeled successively by $1,2,\ldots,n$ for some $n\ge2$, then the corresponding lattice points $x_1,x_2,\ldots,x_n$ in a reduced conic realization are subject to relations of the form $x_{k-1}+x_{k+1}=c_kx_k$ for some $c_2,\ldots,c_{n-1}\in\ZZ_{\ge2}$ \cite[Section 1.6]{Oda}. This gives rise to a bijection between the isomorphism classes of reduced conic realizations of $\PPi$ and the set $(\ZZ_{\ge2})^{n-2}/\ZZ_2$, where $\ZZ_2$ acts on the $(n-2)$-tuples by inversion. Furthermore, this correspondence restricts to a bijection between the isomorphism classes of reduced conic realizations of 1-dimensional simplicial complexes with $n$ vertices, topologically equivalent to an interval, and the set $(\ZZ_{\ge3})^{n-2}/\ZZ_2$.

\begin{remark}\label{0-dimensional}
Although the case of a zero-dimensional polytopal complex is trivial, an interesting question in a complementary direction in the zero-dimensional case is to study $\gr_I(\kk[M])$ for $M$ a \emph{numerical monoid} and $I\subset\kk[M]$ the maximal monomial ideal \cite{Olinger}. Recall, a  submonoid $M\subset\ZZ_+$ is called \emph{numerical} if $\#(\ZZ_+\setminus M)<\infty$.   
\end{remark}
\end{remark}

\section{Bottom simplicial balls}\label{High_D}

In this and the next sections abstract simplicial complexes are considered as lattice polytopal complexes with respect to the coarsest lattice structure. Equivalently, the simplices in abstract simplicial complexes are considered to be unimodular.

A \emph{simplicial sphere} is a simplicial complex, whose geometric realization is homeomorphic to a sphere. For a simplicial sphere $\Sigma$, a simplicial ball, obtained by taking the pyramids with a common apex over the faces of $\Sigma$, will be denoted by $\Delta(\Sigma)$.

\subsection{Obstructions to bottom}\label{Obstructions} 

\medskip\noindent{\bf\emph{Geometric obstruction.}} Obviously, a necessary condition for an abstract simplicial complex $\Delta$ to be bottom is that it needs to be a \emph{simplicial ball,} i.e., $|\Delta|$ must be homeomorphic to a $d$-ball, $d=\dim\Delta$. But this is far from sufficient.

Call a simplicial ball \emph{regular} if it is combinatorially equivalent to a regular triangulation of a polytope. A bottom simplicial $d$-ball $\Delta$ is necessarily regular. In fact, if $C\subset\RR^{d+1}$ is a conic realization of $\Delta$ and $H\subset\RR^{d+1}$ is a hyperplane with $C\cap H=0$ then the projective transformation of $\RR^{d+1}$, moving a $H$ to infinity, transforms $C$ into an infinite prism and the bottom $\B(C)$ into the graph of a support function for a regular triangulation of the orthogonal cross-section of this prism. But this triangulation is equivalent to $\Delta$.

A simplicial sphere is \emph{polytopal} if it is combinatorially equivalent to the boundary complex of a \emph{simplicial polytope,} i.e., a polytope, whose faces are all simplices. The smallest non-polytopal simplcial sphere has dimension 3 and, starting from dimension 5, the class of polytopal simplicial spheres is negligibly small withing all simplicial spheres; see \cite[Section 9.5]{Triang} and the many original references therein.  This fact and the observation above lead to many examples of non-bottom simplicial balls in dimensions $\ge4$: if $\Sigma$ is a non-polytopal simplicial $(d-1)$-sphere then $\Delta(\Sigma)$ is not bottom.

\medskip\noindent{\bf\emph{Lattice obstruction.}} Even if a simplicial sphere $\Sigma$ is polytopal the simplicial ball $\Delta(\Sigma)$ may still fail to be bottom reduced.  In fact, it is shown in \cite{Klein} that the simplicial complex $\Delta(\Sigma_k)$, where $\Sigma_k$ is the boundary complex of a cyclic $4$-polytope with $k\ge 7$ vertices, is not combinatorially equivalent to a simplicial complex, embedded in $\RR^4$ as a system of unimodular simplices. But then neither is $\Delta(\Sigma_k)$ reduced bottom. In fact, assume $C\subset\RR^5$ is a reduced conic realization. Let $H\subset\RR^5$ be a hyperplane with $H\cap C=0$. Let $O\in C$ be the point, corresponding to the interior vertex of $\Delta(\Sigma_k)$. Denote by $\pr:\RR^5\to H$ the parallel projection along $\RR O$. Then the (not necessarily convex) set $\pr(\B(C))$ is triangulated into unimodular 4-simplices with respect to the lattice $\Lambda=\pr(\ZZ^5)\subset H$, a consequence of the fact that these simplices are of the form $\conv(0,\pr(v_1),\ldots,\pr(v_4))$ with $\{v,v_1,\ldots,v_4\}$ a basis of $\ZZ^5$; i.e., $\Delta(\Sigma_k)$ embeds into $H$ as a system of $\Lambda$-unimodular simplices, a contradiction.

\subsection{Bottom complexes from smooth Fano polytopes.}\label{Smooth_Fano} A $d$-dimensional \emph{smooth Fano polytope} is a lattice $d$-polytope $P$, containing $0$ in the interior and such that the vertices of every facet $F\subset P$ form a basis of $\ZZ^d$. This is equivalent to the condition that the toric variety, corresponding to the complete fan of cones over the faces of $P$, is smooth and Fano, i.e., the anticanonical bundle is ample; see \cite[Section 5.8]{Ewald}, \cite[Section 2.3]{Oda}. In every dimension $d$, up to unimodular equivalence, there are only finitely many smooth Fano $d$-polytopes and their complete classification is only known in low dimensions; see \cite{Fano_Nill} for the original references and many applications of these polytopes. In view of Lemma \ref{alternative}(b), reduced bottom simplicial complexes can be thought of as dual objects to smooth Fano polytopes, the duality being `convex away from the origin' vs. `convex towards the origin'. (There is no bottom counterpart of `complete'). 

\medskip There is another and more direct connection between smooth Fano polytopes and reduced bottom simplicial complexes, which we discuss now. 

Let $\Sigma$ be a simplicial $(d-1)$-sphere. Assume $\Delta(\Sigma)$ is reduced bottom, notation as in Section \ref{Obstructions}. Call a reduced conic realization $C\subset\RR^{d+1}$  of $\Delta(\Sigma)$ \emph{regular} if the following set is convex, i.e., is an infinite convex prism:
\begin{align*}
\bigcup_{x\in\B(C)}(x+\RR O)\subset\RR^{d+1},
\end{align*}
where $O$ is the interior vertex of $\B(C)$. Our goal in this sectoin is to describe all regular reduced conic realizations of $\Delta(\Sigma)$.

\medskip Let $P\subset\RR^d$ be a simplicial $d$-polytope with $0\in\inte(P)$. Assume $v_1,\ldots,v_n$ are the vertices of $P$ and $\conv(v_1,\ldots,v_d)\subset P$ is a facet.

To every pair of adjacent facets of $P$ 
\begin{align*}
F=\conv\big(v_{i_1},\ldots,v_{i_{d-1}},v_{i_d}\big)\quad\text{\&}\quad G=\conv(v_{i_1},\ldots,v_{i_{d-1}},v_{i_{d+1}})
\end{align*}
we associate the functional
\begin{equation}
\begin{aligned}\label{functional}
&\phi_{FG}:\RR^{n+1}\to\RR,\\
&(x_0,x_1,\ldots,x_n)\mapsto\lambda_{i_{d+1}}x_{i_{d+1}}+\lambda_{i_d}x_{i_d}-\lambda_{i_{d-1}}x_{i_{d-1}}-\cdots-\lambda_{i_1}x_{i_1}-\lambda_0x_0,
\end{aligned}
\end{equation}
where the $\lambda_i$ are uniquely determined by the conditions:
\begin{equation}\label{baricentic}
\begin{aligned}
&\lambda_{i_d}v_{i_d}+\lambda_{i_{d+1}}v_{i_{d+1}}=\lambda_{i_1}v_{i_1}+\cdots+\lambda_{i_{d-1}}v_{i_{d-1}},\\
&\lambda_{i_d}+\lambda_{i_{d+1}}=1,\\
&\lambda_0+\lambda_{i_1}+\cdots+\lambda_{i_{d-1}}=1,\\
&\lambda_0,\ \lambda_{i_d},\ \lambda_{i_{d+1}}>0,
\end{aligned}
\end{equation}
the inequalities being automatic from the three equalities.

\medskip\noindent\emph{Notice.} If $P$ is a smooth Fano polytope then $\lambda_{i_d}=\lambda_{i_{d+1}}=\frac12$.

\medskip Consider the following convex conical set in $\RR^{n+1}$:
\begin{align*}
\SF(P)=\big\{(x_0,x_1,\ldots,x_n)\ \ \big|\ &f_{FG}(x_0,x_1,\ldots,x_n)\ge0\ \text{for every adjacent}\\
&\text{pair of facets}\ F,G\subset P\big\}.
\end{align*}

Every point $\bar x=(x_0,x_1,\ldots,x_n)\in\RR^{n+1}$ defines the polytopal $d$-ball:
\begin{align*}
\B(\bar x)=\bigcup_
{\tiny{\begin{matrix}
&\conv\big(v_{i_1},\ldots,v_{i_d}\big)\\
&\text{a facet of}\ P
&\end{matrix}
}}
\conv\big((0,x_0),(v_{i_1},x_1),\ldots,(v_{i_{d}},x_d)\big)\subset P\times\RR
\end{align*}

The crucial observation is that the points $\bar x\in\inte(\SF(P))$ are in bijective correspondence with the support functions $P\to\RR$ for the stellar triangulation of $P$, spanned over $0$ by the faces of $P$: to a point $\bar x$ one assigns the function, whose graph is $\B(\bar x)$. This also explains the notation $\SF$.

Consider the sets
\begin{align*}
&\SF^+(P)=\inte(\SF(P))\cap\big(X_0=0,\ X_1,\ldots,X_n\ge0\big),\\
&\SF^0(P)=\inte(\SF(P))\cap\big(X_0=X_1=\cdots=X_d=0,\ X_{d+1},\ldots,X_n\ge0\big).
\end{align*}

\medskip\noindent\emph{Notice.} In the definition of $\SF^0(P)$, we could equivalently require $X_{d+1},\ldots,X_n>0$.

\medskip Let $\Aut(P)$ be the group of linear automoprhisms of $P$. It acts on $P\times\RR_+$ by fixing the $(d+1)$-st coordinate  which, in turn, defines an action $\Aut(P)$ on $\SF^+(P)$ by linear automorphisms as follows: for $\alpha\in\Aut(P)$ and $x\in\SF^+(P)$, the element $\alpha(\bar x)$ is determined from the equality $\B(\alpha(\bar x)=\alpha(\B(\bar x))$, i.e., $\alpha$ permutes the coordinates of $\bar x$ appropriately.

In the notation above, we have

\begin{lemma}\label{combined}
\begin{enumerate}[\rm(a)]
\item $\dim(\SF(P)^0)=n-d$.
\item For every point $\bar x\in\SF^+(P)$, the following conical subset is a cone:
\begin{align*}
C(P,\bar x):=\RR_+(\B(\bar x)+\ee_{d+1})\subset\RR^{d+1}\qquad\text{(see Figure 3)}.
\end{align*}
\item For every point $\bar x\in\SF^+(P)$, there is a unique point $\rho(\bar x)\in\SF^0(P)$, such that the cone $C(P,\rho(\bar x))$ is obtained from $C(P,\bar x)$ by a linear transformation $\phi:\RR^{d+1}\to\RR^{d+1}$ with $\Im({\bf1}-\phi)\subset\RR\ee_{d+1}$.
\item The map $\rho:\SF^+(P)\to\SF^0(P)$, resulting from the part (c), is linear.
\item
$\Aut(P)$-acts on $\SF^0(P)$ by linear automorphisms via
\begin{align*}
\forall\alpha\in\Aut(P),\quad\forall\bar x\in\SF^0(P),\quad\alpha\star\bar x=\rho(\alpha(\bar x)).
\end{align*}
\end{enumerate}
\end{lemma}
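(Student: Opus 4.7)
The plan is to identify $\inte(\SF(P))$ with the set of strictly convex piecewise affine height functions on $P$ for the stellar triangulation from the origin, and to derive all five parts from the observation that adding a linear function on $\RR^d$ to such a height function preserves strict convexity. For part (a), $\SF(P)$ is full-dimensional in $\RR^{n+1}$ since any point with $x_0 = 0$ and $x_1, \ldots, x_n$ sufficiently large satisfies every $\phi_{FG}(\bar x) > 0$. As $\SF^0(P)$ is cut out in $\inte(\SF(P))$ by the $d+1$ independent linear conditions $X_0 = X_1 = \cdots = X_d = 0$, it has dimension $n - d$, with nonemptiness supplied by the construction of $\rho$ below.

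The central construction for (b), (c), (d) is the following. Let $\ell : \RR^d \to \RR$ be the unique linear function satisfying $\ell(v_i) = x_i$ for $i = 1, \ldots, d$; uniqueness holds because $v_1, \ldots, v_d$ form a basis of $\RR^d$, being affinely independent vertices of a simplex-facet whose affine hull avoids $0 \in \inte(P)$. Define the shear $\phi : \RR^{d+1} \to \RR^{d+1}$ by $\phi(y, s) = (y, s - \ell(y))$, so that $\Im({\bf 1} - \phi) \subset \RR\ee_{d+1}$. This shear fixes $(0, 1)$, sends $(v_i, x_i + 1) \mapsto (v_i, 1)$ for $1 \le i \le d$, and sends $(v_j, x_j + 1) \mapsto (v_j, x_j - \ell(v_j) + 1)$ for $j > d$. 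Setting $\rho(\bar x) := (0, \ldots, 0, x_{d+1} - \ell(v_{d+1}), \ldots, x_n - \ell(v_n))$, I claim $\rho(\bar x) \in \inte(\SF(P))$, because the inequalities $\phi_{FG} > 0$ depend only on the piecewise affine function up to linear shifts, and strict convexity forces $x_j - \ell(v_j) > 0$ for $j > d$, as the lifted graph over $v_j$ lies strictly above the affine extension determined by the base facet. Uniqueness of $\phi$, and hence of $\rho(\bar x)$, follows because any such shear must fix $(0, 1)$ and send base-facet vertices to $(v_i, 1)$, forcing the above $\ell$. This establishes (c) and the identity $\phi(C(P, \bar x)) = C(P, \rho(\bar x))$; (d) is immediate since $\ell$ depends linearly on $(x_1, \ldots, x_d)$. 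For (b), the same dictionary identifies $\B(\bar x) + \ee_{d+1}$ with the graph of a strictly convex piecewise affine function at heights $\ge 1$ above $P$, satisfying the convex-towards-$0$ criterion of Lemma \ref{alternative}(a)(i); its conical hull is therefore a $(d+1)$-cone.

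For part (e), associativity $(\alpha\beta) \star \bar x = \alpha \star (\beta \star \bar x)$ reduces to the identity $\rho \circ \alpha \circ \rho = \rho \circ \alpha$ on $\SF^+(P)$, which in turn amounts to $\Aut(P)$-invariance of $\ker \rho$. But $\ker \rho$ consists precisely of the height vectors $(\ell(v_1), \ldots, \ell(v_n))$ arising from linear functions $\ell : \RR^d \to \RR$, and if $\alpha \in \Aut(P)$ permutes vertices by $\alpha(v_i) = v_{\sigma(i)}$, its induced action on heights sends such a vector to the one associated to $\ell \circ \alpha^{-1}$, which is again linear and hence lies in $\ker \rho$. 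Linearity of the resulting action on $\SF^0(P)$ follows from linearity of both $\alpha$ and $\rho$. The main obstacle I anticipate is the careful bookkeeping for (b): making rigorous the dictionary between $\inte(\SF(P))$ and strictly convex piecewise affine functions via the barycentric identities (\ref{baricentic}) and sign conventions in (\ref{functional}), and verifying that the translated ball $\B(\bar x) + \ee_{d+1}$ indeed meets the convex-towards-origin hypothesis of Lemma \ref{alternative}.
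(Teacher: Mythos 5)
Your proposal is correct and follows essentially the same route as the paper: the same unipotent shear $\phi(y,s)=(y,s-\ell(y))$ with $\ell(v_i)=x_i$ on the base facet, the same resulting linear formula $\rho(\bar x)_k=x_k-\ell(v_k)$ (the paper's $y_k=x_k-\sum_{i=1}^d a_ix_i$), and the same reduction of (e) to the identity $\rho\circ\alpha\circ\rho=\rho\circ\alpha$, which you justify via $\Aut(P)$-invariance of $\ker\rho$ while the paper invokes the uniqueness in (c) together with $\rho^2=\rho$. The ``bookkeeping'' you flag for (b) is exactly the paper's two-line computation: by the identities (\ref{baricentic}) the coefficients of $\phi_{FG}$ sum to $\lambda_0$, so $\phi_{FG}(0,x_1+1,\ldots,x_n+1)=\phi_{FG}(0,x_1,\ldots,x_n)+\lambda_0>0$, which directly yields convexity of $C(P,\bar x)$ without appealing to Lemma \ref{alternative}.
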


\begin{figure}[h!]
\caption{Cone $C(P,\bar x)$ for $x\in\SF^0(P)$}
\vspace{.25in}
\includegraphics[scale=11]{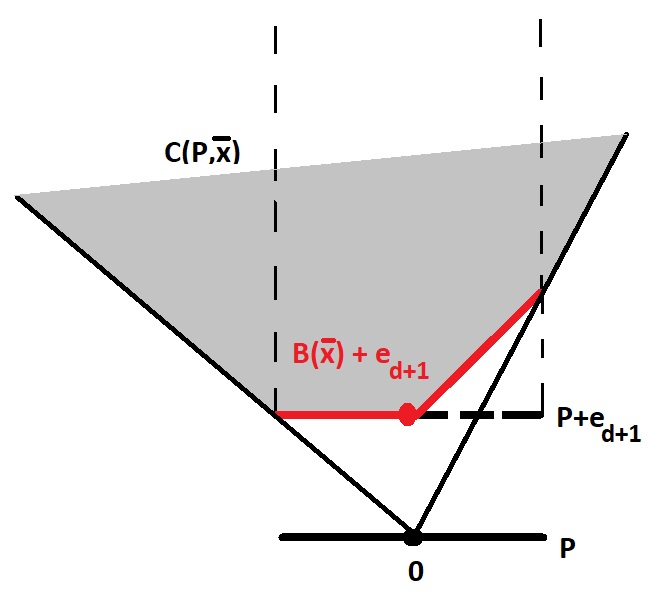}
\end{figure}

\begin{proof} (a) We only need to show that the dimension in question is $\ge n-d$. For every $\bar x=(0,\ldots,0,x_{d+1},\ldots,x_n)$ in the mentioned cone, the polytopal ball $\B(\bar x)$ defines a support function for the stellar subdivision of $P$. So all small random perturbations of $\bar x$ of the form $(0,\ldots,0,x'_{d+1},\ldots,x'_n)$ still define support functions for this triangulation.

\medskip\noindent(b) Let $\bar x=(0,x_1,\ldots,x_n)\in\SF^+(P)$. The convexity of $C(P,\bar x)$ is equivalent to the inequalities $\phi_{FG}(0,x_1+1,\ldots,x_n+1)>0$
for the functionals (\ref{functional}). But (\ref{baricentic}) implies
\begin{align*}
\phi_{FG}(0,x_1+1,\ldots,x_n+1)=\phi_{FG}(0,x_1,\ldots,x_n)+\lambda_0>0.
\end{align*}

\medskip\noindent(c) This part is equivalent to the claim that the automorphism
\begin{align*}
\phi:\RR^{d+1}&\to\RR^{d+1},\\
(v_i,x_i)&\mapsto(v_i,1),\qquad i=1,\ldots,d,\\
\ee_{d+1}&\mapsto\ee_{d+1},
\end{align*}
maps $C(P,\bar x)$ to a cone $C(P,\bar y)$ for some $\bar y\in\SF^0(P)$. But this follows from the fact that the polytopal ball $\phi\big(\B(C(P,\bar x))\big)$ is convex towards $0$.

\medskip\noindent(d) Let $\bar x=(0,\ldots,0,x_{d+1},\ldots,x_n)\in\SF^+(P)$ and $\rho(\bar x)=\bar y=(0,\ldots,0,y_{d+1},\ldots,y_n)$. Pick $k\in\{d+1,\ldots,n\}$. We have $v_k=\sum_{i=1}^d a_iv_i$ for some uniquely determined $a_1,\ldots,a_d\in\RR$. The linear dependence in $\RR^{d+1}$
\begin{align*}
\sum_{i=1}^d a_i(v_i,x_i+1)=(v_k,x_k+1)+\left(\sum_{i=1}^da_i(x_i+1)-x_k-1\right)\ee_{d+1},
\end{align*}
upon application of $\phi$ from the part (c), implies
\begin{align*}
\sum_{i=1}^d a_i(v_i,1)=(v_k,y_k+1)+\left(\sum_{i=1}^da_i(x_i+1)-x_k-1\right)\ee_{d+1}.
\end{align*}
In particular, we have a linear functional
\begin{equation}\label{linear_functional}
y_k=x_k-\sum_{i=1}^da_ix_i.
\end{equation}

\medskip\noindent(e)  That $\Aut(P)$ acts on the set $\SF^0(P)$ follows from the identities $\rho^2=\rho$ and
\begin{align*}
\rho(\alpha(\beta(\bar x)))=\rho(\alpha(\rho(\beta(\bar x)))),\quad\alpha,\beta\in\Aut(P),\quad\bar x\in\SF(P)^0,
\end{align*}
the latter being a consequence of the uniqueness of $\rho(\bar x)$ in the part (c). But then, by the part (d), $\Aut(P)$ acts on $\SF^0(P)$ by linear automorphisms.
\end{proof}

In the notation, used in Lemma \ref{combined}, we have

\begin{lemma}\label{fundamental}
Assume $P$ is a smooth Fano $d$-polytope with $n$ vertices.
\begin{enumerate}[\rm(a)] 
\item The action of $\Aut(P)$ on $\SF^0(P)$ restricts to an action on $\SF^0(P)\cap\ZZ^{n+1}$.
\item There exists an $(n-d)$-cone $C\subset(\underbrace{0,\ldots,0}_{d+1},\RR^{n-d})$, such that:
\begin{enumerate}[\rm(i)]
\item
$\inte(C)\subset\SF^0(P)$,
\item $C\cap\SF^0(P)\cap\ZZ^{n+1}$ surjects onto $\big(\SF^0(P)\cap\ZZ^{n+1}\big)/\Aut(P)$,
\item
$\inte(C)\cap\SF^0(P)\cap\ZZ^{n+1}$ injects into $\big(\SF^0(P)\cap\ZZ^{n+1}\big)/\Aut(P)$.
\end{enumerate}
\end{enumerate} 
\end{lemma}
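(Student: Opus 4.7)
For part (a), the action $\alpha\star\bar x=\rho(\alpha(\bar x))$ factors as a coordinate permutation $\alpha$ (which preserves $\ZZ^{n+1}$ tautologically) followed by the linear map $\rho$. The explicit formula (\ref{linear_functional}) reads $y_k=x_k-\sum_{i=1}^d a_i^{(k)}x_i$ for $k>d$, with coefficients determined by $v_k=\sum_{i=1}^d a_i^{(k)}v_i$. The smooth Fano hypothesis forces $\{v_1,\ldots,v_d\}$ to be a $\ZZ$-basis of $\ZZ^d$, so every $a_i^{(k)}\in\ZZ$; hence $\rho$, and therefore the composite $\alpha\star$, preserves $\ZZ^{n+1}$.

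For part (b), the plan is to construct $C$ as a Voronoi-type fundamental domain for the $\Aut(P)$-action. The closure $\overline{\SF^0(P)}$ is a rational polyhedral $(n-d)$-cone in the subspace $\{x_0=x_1=\cdots=x_d=0\}$, cut out by the rational inequalities $\phi_{FG}\ge0$ coming from adjacent facet pairs of $P$ together with $x_{d+1},\ldots,x_n\ge0$; full-dimensionality is Lemma \ref{combined}(a). By Lemma \ref{combined}(e) and part (a), $\Aut(P)$ acts on this cone by rational linear automorphisms preserving $\ZZ^{n+1}$. Fix a rational linear functional $\ell$ strictly positive on $\overline{\SF^0(P)}\setminus\{0\}$---for instance a small rational perturbation of $\ell(\bar x)=\sum_{k>d}x_k$ inside the open cone of admissible positive functionals, chosen so that $\ell$ separates the orbit of some fixed interior point with trivial $\Aut(P)$-stabilizer---and set
\[
C=\bigl\{\bar x\in\overline{\SF^0(P)}\ \big|\ \ell(\bar x)\le\ell(\alpha\star\bar x)\ \text{for every}\ \alpha\in\Aut(P)\bigr\}.
\]
This is a rational polyhedral cone of the correct dimension $n-d$, with full-dimensionality coming from the separation arranged above.

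With $C$ in hand, properties (i)--(iii) follow routinely. (i) The relative interior of $C$ lies in $\SF^0(P)=\inte(\overline{\SF^0(P)})$ since it is precisely where all defining inequalities are strict. (ii) For any $\bar x\in\SF^0(P)\cap\ZZ^{n+1}$, an $\ell$-minimal orbit representative lies in $C\cap\SF^0(P)\cap\ZZ^{n+1}$. (iii) If $\bar x$ and $\alpha\star\bar x$ with $\alpha\ne e$ both lay in $\inte(C)$, the strict defining inequalities at the two points, together with $\beta\star(\alpha\star\bar x)=(\beta\alpha)\star\bar x$, would force $\ell(\bar x)<\ell(\alpha\star\bar x)<\ell(\bar x)$ upon choosing $\beta=\alpha^{-1}$, a contradiction. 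The one technical obstacle is arranging the full-dimensionality of $C$; this reduces to observing that the ``bad'' functionals---those failing to separate a chosen orbit with trivial stabilizer---form a finite union of proper rational hyperplanes inside the open rational cone of admissible $\ell$'s, so a suitable rational $\ell$ can always be chosen.
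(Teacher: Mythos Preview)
Your argument for (a) coincides with the paper's: both note that $\alpha\in\Aut(P)$ permutes the coordinates of $\SF^+(P)$, hence preserves $\ZZ^{n+1}$, and that the smooth Fano hypothesis makes the coefficients $a_i^{(k)}$ in (\ref{linear_functional}) integers, so $\rho$ preserves the lattice as well. For (b) the paper takes a much shorter route than yours: it simply invokes an external reference for the general construction of a fundamental cone for a finite group acting linearly on a cone, observing that Lemma \ref{combined}(e) and part (a) place us in that setting. Your Dirichlet-type construction via a generic positive functional $\ell$ is a reasonable way to make this self-contained.

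One step does fail as written, however. You rely on an interior point with \emph{trivial} $\Aut(P)$-stabilizer, but the $\star$-action on $\SF^0(P)$ need not be faithful, and then no such point exists. Take $P$ the standard $d$-cross-polytope, with $v_i=e_i$ and $v_{d+i}=-e_i$ for $i=1,\ldots,d$. Then $a_j^{(d+i)}=-\delta_{ij}$, so $\rho$ sends $(0,x_1,\ldots,x_{2d})$ to $(0,\ldots,0,x_{d+1}+x_1,\ldots,x_{2d}+x_d)$; the sign change $\sigma_i\in\Aut(P)$ swapping $v_i\leftrightarrow v_{d+i}$ swaps the coordinates $x_i\leftrightarrow x_{d+i}$, and for $\bar x$ with $x_1=\cdots=x_d=0$ one checks directly that $\rho(\sigma_i(\bar x))=\bar x$. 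Thus the kernel of the $\star$-action contains $(\ZZ/2)^d$, every stabilizer is nontrivial, and your chosen interior point cannot exist. The same issue breaks your verification of (iii), where ``$\alpha\ne e$'' is supposed to force a strict inequality. The repair is routine: work modulo the kernel $K$ of the action---choose $\bar x_0$ with stabilizer exactly $K$ (such points are generic, since each $\alpha\notin K$ fixes only a proper linear subspace), and in (iii) replace ``$\alpha\ne e$'' by ``$\alpha\star\bar x\ne\bar x$''. With this patch your argument goes through and gives a self-contained proof where the paper relies on a citation.
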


\begin{proof}
(a) Since $P$ is a smooth Fano polytope, the action of $\Aut(P)$ on $\RR^d$ respects the integer lattice $\ZZ^d$. Consequently, the $\Aut(P)$-action on $P\times\RR_{\ge0}$ respects the lattice structure and we have the induced action of $\Aut(P)$ on $\SF^+(P)\cap\ZZ^{n+1}$. Using again that $P$ is a smooth Fano polytope, the linear functional (\ref{linear_functional}) is defined over $\ZZ$. In other words, the homomorphism $\rho$ in Lemma \ref{combined}(d) restricts to a homomorhism $\SF^+(P)\cap\ZZ^{n+1}\to\SF^0(P)\cap\ZZ^{n+1}$ and, thus, the action of $\Aut(P)$ on $\SF^0(P)$ respects the lattice structure. 

\medskip\noindent (b) This is a direct consequence of the general construction of a \emph{fundamental domain/cone} for a discrete automorphism group of a cone \cite[Application 4.14]{Fundamental}, applicable to our situation in view Lemma \ref{combined}(e) and the part (a).
\end{proof}

\medskip\noindent\emph{Notice.} Lemma \ref{fundamental} implies that, for a smooth Fano polytope $P$, almost all of the orbit set $\big(\SF^0(P)\cap\ZZ^{n+1}\big)/\Aut(P)$, except a `measure 0 part',  can be made in a systematic way into a full rank sub-semigroup of $\ZZ^{n-d}$, although this structure depends on the choice of the fundamental domain/cone. We expect that the full orbit set also carries a similar semigroup structure, which would follow from the existence of a convex conical \emph{strict} fundamental domain of the mentioned $\Aut(P)$-action. 

\medskip Let $\Sigma$ be a simplicial sphere and $\Fano(\Sigma)$ be the set of unimodular equivalence classes of the smooth Fano polytopes with the boundary complex, equivalent to $\Sigma$. As $\Sigma$ varies over simplicial spheres: always $\#\Fano(\Sigma)<\infty$, often $\#\Fano(\Sigma)\le1$, and in the absolute majority of cases $\#\Fano(\Sigma)=0$ .

Lemmas \ref{combined} and \ref{fundamental} explain the structures, used in the statement of the following:

\begin{theorem}\label{Fano_bottom}
For a simplicial sphere $\Sigma$, as $P$ varies over $\Fano(\Sigma)$ and $\bar x$ varies over $\SF^0(P)\cap\ZZ^{n+1}$, the assignment $\bar x\mapsto C(P,\bar x)$  gives rise to a bijective correspondence between the isomorphism classes of the regular reduced conic realizations of $\Delta(\Sigma)$ and the disjoint union\ \ $\biguplus_{P\in\Fano(\Sigma)} \big(\SF^0(P)\cap\ZZ^{n+1}\big)/\Aut(P)$.
\end{theorem}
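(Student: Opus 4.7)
The plan is to split the correspondence into three steps: (i) verify that every $C(P,\bar x)$ with $P \in \Fano(\Sigma)$ and $\bar x \in \SF^0(P) \cap \ZZ^{n+1}$ is a regular reduced conic realization of $\Delta(\Sigma)$; (ii) show surjectivity by reconstructing a pair $(P, \bar x)$ from an arbitrary regular reduced conic realization $C$; and (iii) identify the fibers of the resulting map with $\Aut(P)$-orbits, invoking Lemma \ref{combined}(e).

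For (i), the facets of $\B(C(P,\bar x))$ are the pyramids $\conv((0,1),(v_{i_1},x_{i_1}+1),\ldots,(v_{i_d},x_{i_d}+1))$ indexed by facets $\conv(v_{i_1},\ldots,v_{i_d})$ of $P$; since $\bar x \in \SF^0(P)$ forces the apex $(0, x_0+1)$ to equal $\ee_{d+1}$, and since $P$ being smooth Fano makes $\{v_{i_1},\ldots,v_{i_d}\}$ a basis of $\ZZ^d$, the set $\{\ee_{d+1},(v_{i_1},x_{i_1}+1),\ldots,(v_{i_d},x_{i_d}+1)\}$ is a basis of $\ZZ^{d+1}$. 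This yields both the unimodularity (hence normality) of each facet and the lattice-distance-one condition of Lemma \ref{alternative}(b). Convexity of the ball toward $0$ is the statement that $\bar x \in \inte(\SF(P))$ defines a support piece-wise affine function for the stellar subdivision of $P$, as recorded after \eqref{baricentic}; and the regularity condition holds because the vertical cylinder over $\B(C(P,\bar x))$ is a translate of $P \times \RR$.

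For (ii), let $C \subset \RR^{d+1}$ be a regular reduced conic realization and $O \in C \cap \ZZ^{d+1}$ the lattice generator at the apex of $\Delta(\Sigma)$, distinguished as the unique vertex lying in the topological interior of $|\Delta(\Sigma)|$. Fix any facet of $\B(C)$ containing $O$; by the reduced condition its vertices form a $\ZZ^{d+1}$-basis, so a unimodular change of coordinates brings $O$ to $\ee_{d+1}$ and the remaining vertices of that facet to $(v_1,1),\ldots,(v_d,1)$ with $\{v_1,\ldots,v_d\}$ a $\ZZ^d$-basis. Regularity implies that the projection $\pr : \RR^{d+1} \to \RR^d$ along $\ee_{d+1}$ maps $\B(C)$ homeomorphically onto the boundary of a polytope $P \subset \RR^d$ with $0 \in \inte(P)$ and $\partial P \cong \Sigma$. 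Applying the basis argument to every facet of $\B(C)$ shows each facet of $P$ has a $\ZZ^d$-basis as its vertex set, so $P \in \Fano(\Sigma)$; the $(d+1)$-st coordinates of the remaining vertices of $\B(C)$ assemble into the required point $\bar x \in \SF^0(P) \cap \ZZ^{n+1}$, and $C \cong C(P,\bar x)$ by construction.

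The main obstacle is step (iii). A lattice isomorphism $C(P,\bar x) \to C(P',\bar x')$ must preserve the unique interior vertex of $\Delta(\Sigma)$, hence sends $O$ to $O$; by the reconstruction of (ii) it therefore descends under $\pr$ to a $\ZZ^d$-linear bijection $P \to P'$, forcing $P = P'$ as elements of $\Fano(\Sigma)$. The induced action of such an automorphism on the height coordinates is, by Lemma \ref{combined}(d)--(e), precisely the $\Aut(P)$-action on $\SF^0(P)$, so $\bar x$ and $\bar x'$ lie in the same $\Aut(P)$-orbit. The converse direction is routine: any $\alpha \in \Aut(P)$ extends to a unimodular transformation of $\RR^{d+1}$ fixing $\ee_{d+1}$ and carrying $C(P,\bar x)$ to $C(P, \alpha \star \bar x)$, so the map descends to the claimed bijection on the quotient.
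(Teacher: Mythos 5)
Your proposal is correct and takes essentially the same route as the paper's proof: well-definedness of $\bar x\mapsto C(P,\bar x)$ via Lemma \ref{combined}(b) and the unimodularity of the lifted facets, surjectivity by projecting $\B(C)$ along the interior vertex to recover a smooth Fano polytope with boundary complex $\Sigma$ and reading off $\bar x$ from the last coordinates, and injectivity by observing that a lattice isomorphism fixes $\ee_{d+1}$, preserves the prism over $P$, and hence induces some $\alpha\in\Aut(P)$ with $\alpha\star\bar x=\bar x'$ by the uniqueness underlying Lemma \ref{combined}(c)--(e). One cosmetic slip: the projection maps $\B(C)$ homeomorphically onto the polytope $P$ itself, carrying the boundary of the ball to $\partial P$, not onto $\partial P$ as written.
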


\begin{proof}
First we observe that, if $\Delta(\Sigma)$ admits a regular reduced conic realization $C\subset\RR^{d+1}$, then $\Sigma$ is equivalent to the boundary complex of a smooth Fano polytope. Let $O$ be the interior vertex of $\B(C)$ and $H\subset\RR^{d+1}\setminus\{O\}$ an arbitrary hyperplane. Then the image $P$ of $\B(C)$ in $H$ under the parallel projection along the line $\RR O\subset\RR^{d+1}$ is a convex $\Lambda$-polytope, where $\Lambda\subset H$ is the image of $\ZZ^{d+1}$ under the projection -- it is a lattice in $H$. Thinking  of $\Lambda$ as $\ZZ^d$, the polytope $P$ becomes a smooth Fano polytope, whereas the boundary complex of $P$ is equivalent to $\Sigma$.

Assume $P\in\Fano(\Sigma)$. It follows from Lemma \ref{combined}(b) that, for every $\bar x\in\SF^0(P)\cap\ZZ^{n+1}$, the cone $C(P,\bar x)$ is a regular reduced conic realization of $\Delta(\Sigma)$.

Conversely, we claim that, for a regular reduced conic realization  $C\subset\RR^{d+1}$ of $\Delta(\Sigma)$, there exists $P\in\Fano(\Sigma)$ and $\bar x\in\SF(P)^0\cap\ZZ^{n+1}$, such that $C\cong C(P,\bar x)$. Assume $w_1,\ldots,w_n$ are the vertices of $\B(C)$ in the boundary of $\B(C)$ and $w_0$ is the inner vertex of $\B(C)$. Consider the plane $H=\sum_{i=1}^d(w_i-w_0)\subset\RR^{d+1}$. Let $Q$ be the image of $\B(\bar x)$ in $H$ under the parallel projection $\RR^{d+1}\to H$ along the line $\RR w_0\subset\RR^{d+1}$ and $\Lambda$ be that of $\ZZ^{d+1}$. The linear transformation $\ZZ^{d+1}\to\ZZ^{d+1}$, mapping $\Lambda$ to $(\ZZ^d,0)$ and $w_0$ to $\ee_{d+1}$, induces an isomorphism $C\to C(P,\bar x)$, where $P\in\Fano(\Sigma)$ is the image of $Q$.

It remains to show that, for a polytope $P\in\Fano(\Sigma)$ and points $\bar x,\bar y\in\SF(P)^0\cap\ZZ^{n+1}$, the cones $C(P,\bar x)$ and $C(P,\bar y)$ are lattice isomorphic if and only if $\bar x$ and $\bar y$ are in a same $\Aut(P)$-orbit. The `if' part is immediate from the definition of the $\Aut(P)$-action. Assume $\theta:C(P,\bar x)\to C(P,\bar y)$ is a lattice isomorphism, for which
\begin{align*}
(v_k,1)&\mapsto (v_{i_k},y_{i_k}),\ \text{for some}\ i_k,\ \text{where}\ k=1,\ldots,d,\\
\ee_{d+1}&\mapsto\ee_{d+1},
\end{align*}
where the $v_i$ are as in Lemmas \ref{combined} and \ref{fundamental} and $\bar y=(0,\ldots,0,y_{d+1},\ldots,y_n)$. Since $\theta$ leaves the following prism in $\RR^{n+1}$ invariant
\begin{align*}
\bigcup_{z\in P}(z+\RR v_0)=\bigcup_{z\in\B(C(P,\bar x))}(z+\RR v_0)=\bigcup_{z\in\B(C(P,\bar y))}(z+\RR v_0),
\end{align*}
the assignment $v_k\mapsto v_{i_k}$, $k=1,\ldots,d$, gives rise to an automorphism $\alpha\in\Aut(P)$ and, due to the uniqueness of $\rho(x)$ in Lemma \ref{combined}(c),  we have $\alpha\star\bar x=\bar y$.
\end{proof}

\begin{example}\label{explicit_computation}
(a) If $\Sigma$ defines a minimal triangulation of the $(d-1)$-sphere then $\#\Fano(\Sigma)=1$, the corresponding toric variety is $\PP^d_\kk$, and Lemma \ref{fundamental}(b) yields a unique semigroup structure on the full set of isomorphism classes of regular reduced conic realizations of $\Delta(\Sigma)$, which is isomorphic to the additive semigroup $\NN$.

\medskip(b) If $P\subset\RR^d$ is the standard $d$-dimensional cross-polytope, then the corresponding toric variety is $\big(\PP^1_\kk\big)^d$, Lemma \ref{fundamental}(b) yields a unique semigroup structure on the full orbit set $\big(\SF^0(P)\cap\ZZ^{2d+1}\big)/\Aut(P)$, and we have isomorphisms:
\begin{align*}
\big(\SF^0(P)\cap\ZZ^{2d+1}\big)/\Aut(P)\cong&\NN^d/\text{Sym}(d)\cong\\
&\{(a_1,\ldots,a_d)\in\NN^d\ |\ a_1\le\cdots\le a_d\}\cong\NN\times(\ZZ_+)^{d-1}.
\end{align*}
\end{example}

\begin{example}\label{useful}
Let $P\subset\RR^d$ be a smooth Fano polytope. Then
\begin{align*}
(0,\underbrace{k,\ldots,k}_n)\in\SF^+(P)\cap\ZZ^{n+1},\qquad k\in\NN,
\end{align*}
are fixed points of the $\Aut(P)$-action. Moreover, the homomorphism $\rho$ is injective on this set of points. In fact, for the corresponding cones
\begin{align*}
C\big((P,(0,k,k,\ldots,k)\big)\subset\RR^{d+1},\qquad k\in\NN,
\end{align*}
a direct inspection of the relations between the Hilbert basis elements shows that different values of $k$ yield non-lattice-isomorphic cones.
\end{example}

\begin{example}\label{Non_regular_Fano}
If $\Sigma$ is combinatorially equivalent to the boundary complex of a smooth Fano polytope,  $\Delta(\Sigma)$ may well have a \emph{non-regular} bottom reduced conic realization. Consider the cones
\begin{align*}
&C_1=\RR_+(-1,0,1)+\RR_+(0,0,1)+\RR_+(1,0,2)+\RR_+(3,1,2),\\
&C_2=\RR_+(-1,0,1)+\RR_+(0,0,1)+\RR_+(1,0,2)+\RR_+(3,-1,2).
\end{align*}
The maximal facets in the bottom complexes of these cones are, respectively, the following pairs of unimodular triangles, whose affine hulls are on lattice distance one from the origin:
\begin{align*}
&\conv((-1,0,1),(0,0,1),(3,1,2))\quad\&\quad\conv((0,0,1),(1,0,2),(3,1,2)),\\
&\conv((-1,0,1),(0,0,1),(3,-1,2))\quad\&\quad\conv((0,0,1),(1,0,2),(3,-1,2)).
\end{align*}
The two cones can be glued in the sense of Lemma \ref{Patching}, using the element $\gamma=(0,0,1)$ as in the proof of that lemma. By Lemma \ref{alternative}(b), for $t\gg1$, the resulting cone $C_t$ will be a reduced bottom realization of $\Delta(\Sigma)$, where $\Sigma$ is equivalent to the boundary complex of the standard 2-dimensional cross-polytope. Yet, for every $t$, the orthogonal projection of $\B(C_t)$ onto $\RR^2$ is the same \emph{non-convex} set 
$$
\conv((-1,0),(1,0),(3,1))\cup\conv((-1,0),(1,0),(3,-1))\subset\RR^2.
$$
\end{example}

\begin{remark}\label{Fano_construct}
If a simplicial sphere $\Sigma$ is not equivalent to  the boundary complex of a smooth Fano polytope then, according to Theorem \ref{Fano_bottom}, $\Delta(\Sigma)$ admits no regular reduced conic realization. But it may well admit non-regular reduced conic realization. In fact, in Theorem \ref{Delta} below we show that, for \emph{every} simplicial circle $\Sigma$, the complex $\Delta(\Sigma)$ is bottom reduced.  
\end{remark}

\section{Bottom simplicial discs}\label{Bottom_discs}

In Section \ref{Non_stacked} we derive infinitely many reduced bottom simplicial discs, not covered by Theorems \ref{MAIN} and \ref{Fano_bottom}. In view of Lemma \ref{alternative}(b) the following question is reminiscent of the open question whether all complete 3-dimensional simplicial fans are combinatorially equivalent to smooth fans (e.g., \cite{Simplicial_2}):

\begin{question}\label{open}
Is every simplicial disc reduced bottom?
\end{question}

\subsection{Simplicial balls as reduced bottom complexes of affine monoids.} Here we show that, if the normality condition for the underlying monoid is relaxed, then Question \ref{open} has the positive answer.

Recall, a simplicial complex $\Delta$ is called regular if it is combinatorially equivalent to a regular triangulation of a polytope (Section \ref{Obstructions}).

Let $\kk$ be a field. For an affine positive monoid $M$, denote by $\gr(\kk[M])$ the associated graded algebra with respect to the maximal monomial ideal in $\kk[M]$. 

\begin{theorem}\label{evidence}
\begin{enumerate}[\rm(a)]
\item A simplicial ball $\Delta$ is regular if and only if there exists an affine positive monoid $M$, such that $\kk[\Delta]\cong\gr(\kk[M])$ as graded $\kk$-algebras.
\item Every two-dimensional simplicial disc is regular. 
\end{enumerate}
\end{theorem}

In the proof we will use several times the following projective map
\begin{align*}
\pi:\PP_\RR^{d+1}\to\PP_\RR^{d+1},\qquad [x_0:x_1:\cdots:x_d:x_{d+1}]\mapsto[x_{d+1}:x_1:\cdots:x_d:x_0],
\end{align*}
where $\RR^{d+1}\equiv(X_0=1)$. 

\begin{proof}(a) Assume $\kk[\Delta]\cong\gr(\kk[M])$ as graded $\kk$-algebras for an affine positive monoid $M\subset\ZZ^d$. Then the same approach as in the proof of Theorem \ref{Folklore} shows that $\Delta$ is combinatorially equivalent to the geometric simplicial complex, defined by the compact faces of $\conv(M\setminus\{0\})\subset\RR^d$, and the same argument as in Section \ref{Obstructions} for the special case $M=C\cap\ZZ^d$, where $C$ is a cone, implies that $\Delta$ is regular. 

Conversely, let  $\Delta$ be combinatorially equivalent to a regular triangulation $\mathcal T$ of a $d$-polytope $P\subset\RR^d$. Let $f:P\to\RR$ be a support function for $\mathcal T$ and $\Gamma_f\subset\RR^{d+1}$ be its graph. Without loss of generality we can assume $f(P)\subset\RR_{>0}$ and $0\in\inte(P)$.

First we show that without loss of generality $P$ can be assumed to be a simplicial polytope.  For a sufficiently small number $t<0$, the polytope
\begin{align*}
Q=\bigg(\bigcup_{x\in\Gamma_f}\big[x,t\ee_{d+1}\big]\bigg)\ \bigcap\ (\RR^d,0)
\end{align*}
is simplicial and the projection of $\mathcal T$ into $(\RR^d,0)$ from the pole $t\ee_{d+1}$ induces a triangulation of $Q$, combinatorially equivalent to $\Delta$. The map $\pi$ transforms the subset
\begin{align*}
\bigg(\bigcup_{x\in\Gamma_f}\big[x,t\ee_{d+1}\big]\bigg)-t\ee_{d+1}\subset\RR^d\times\RR_+
\end{align*}
into a set of the form $\bigcup_{x\in\Gamma\rq{}}\big(x-\RR_+\ee_{d+1}\big)$, where $\Gamma\rq{}\subset\RR^d\times\RR_{>0}$ is the corresponding transform of $\Gamma_f$. But then the polytope
\begin{align*}
R=\bigg(\bigcup_{x\in\Gamma\rq{}}\big(x-\RR_+\ee_{d+1}\big)\bigg)\ \bigcap\ (\RR^d,0)
\end{align*}  
is simplicial and the piece-wise affine function $R\to\RR$, whose graph is $\Gamma\rq{}$, supports a triangulation of $R$, combinatorially equivalent to $\Delta$.  

Having achieved that $\Delta$ is combinatorially equivalent to a regular triangulation $\mathcal T$ of a simplicial $d$-polytope $P\subset\RR^d$, supported by a function $f:P\to\RR_{>0}$, we can further assume that $P$ and the simplices in $\mathcal T$ are rational and so are the values of $f$ at the vertices of $\mathcal T$. The map $\pi$ transforms the infinite prism $P\times\RR$ into a cone $C\subset\RR^d\times\RR_{>0}$. The image $\pi(\Gamma_f)$ of the graph $\Gamma_f$ of $f$ is a polytopal ball, convex towards $0$ and with rational vertices. We have $\RR_+\pi(\Gamma_f)=C$. Let $k\pi(\Gamma_f)$ be the dilation of $\pi(\Gamma_f)$ by a factor $k\in\NN$, making the vertices of $\pi(\Gamma_f)$ into lattice points. Consider the affine monoid $M\subset\ZZ^{d+1}$, generated by the vertices of the polytopal ball $k\pi(\Gamma_f)$. The same approach as in the proof of Theorem \ref{Folklore} shows $\kk[\Delta]\cong\gr(\kk[M])$.

\medskip\noindent(b) Assume $\Delta$ is a simplicial disc. The classical \emph{Steinitz Theorem} on the 1-skeleton of a 3-polytope \cite[Section 15]{Stacked} implies that every simplicial 2-sphere is polytopal; see also the discussion in \cite[p. 503]{Triang}. Let $\Sigma(\Delta)$ be a simplical sphere, obtained by taking the pyramids with a common vertex over the boundary faces of $\Delta$, and $P\subset\RR^3$ be a simplicial $3$-polytope, whose boundary complex is combinatorially equivalent to $\Sigma(\Delta)$. Assume $v\in\vertex(P)$ represents the vertex in $\Sigma(\Delta)\setminus\Delta$. We can assume $v=0$ and $P\in\RR^2\times\RR_{>0}$. The map $\pi$ transforms the cone $\RR_+P\subset\RR^3$ into an infinite prism. Moreover, the union of faces of $P$, not incident with $0$, is transformed into the graph of a piece-wise affine function $h$. Then the function $-h$ supports a triangulation of the orthogonal cross-section of this prism by $(\RR^2,0)$. But this triangulation is combinatorially equivalent to $\Delta$.
\end{proof}

\subsection{A criterion for bottom simplicial discs}\label{2DIM}  
Here we derive a criterion for a simplicial disc to be reduced bottom. It leads to an algorithm which, if implemented, can be used for analyzing many explicit examples. The idea for the criterion is based on the approach in \cite{Klein}.

Let $\Delta$ be a reduced bottom simplicial disc on the vertex set $\{1,\ldots,n\}$. Assume $\iota:\Delta\to\RR^3$ is an embedding, satisfying the condition in Lemma \ref{alternative}(b), and $C\subset\RR^3$ is the corresponding cone. Put $[i]=\iota(i)$, where $i=1,\ldots,n$. Then the points $[i]$ are subject to relations with integer coefficients of the form:
\begin{equation}\label{Relations}
[i]+[j]+\lambda_{kl}[k]+\mu_{kl}[l]=0,\qquad i<j,\ k<l,
\end{equation}
where:
\begin{enumerate}[\rm(i)]
\item $\{i,j,k\},\ \{j,k,l\}\in\Delta$: this is the condition that $\{[i],[k],[l]\}$ is a basis for $\ZZ^3$ if and only if $\{[j],[k],[l])\}$ is;
\item $\lambda_{kl}+\mu_{kl}\le-3$: this is a part of the convexity condition towards $0$ for $|\iota(\Delta)|$;
\item $\lambda_{kl}>0$ whenever $[l]$ is on the boundary of $|\iota(\Delta)|$ and symmetrically for $\mu_{kl}$: this is the other part of the convexity condition towards $0$, namely that the conical set $\RR_+|\iota(\Delta)|$ is a cone.
\end{enumerate}

\noindent\emph{Notice.} When the boundary of $\Delta$ consists of only three segments the condition (iii) is redundant.

\medskip Conversely, if a system of nonzero vectors $v_1,\ldots,v_n\in\RR^3$ satisfies the relations above upon substituting $v_i\mapsto[i]$, then the assignment $i\mapsto v_i$ gives rise to an embedding $\iota:\Delta\to\RR^3$, satisfying the conditions in Lemma \ref{alternative}(b), where the lattice of reference is changed from $\ZZ^3$ to $\ZZ v_1+\cdots+\ZZ v_n\cong\ZZ^3$.

Consider the following $m\times n$-matrix $\MM_\Delta$, where $m$ is the number of edges in $\Delta$ that are shared by two triangles in $\Delta$: the $\alpha\beta$-entry is the coefficient of $[\beta]$ in the corresponding linear relation (\ref{Relations}). We consider these relations as linear relations involving all points $[1],\ldots,[n]$, some with 0 coefficients. Let $L_\Delta:(\RR^3)^n\to(\RR^3)^m$ be the linear map
\begin{align*}
(w_1,\ldots,w_n)\mapsto\left(\sum_{\beta=1}^n m_{\alpha\beta}w_\beta\right)_{\alpha=1}^m
\end{align*}
Then $\ker(L_\Delta)$ consists the $n$-tuples $(v_1,\ldots,v_n)$ of vectors in $\RR^3$, satisfying the relations (\ref{Relations})(i,ii,iii) upon substituting $v_i\mapsto[i]$. Without loss of generality we can assume that $[1]$, $[2]$ and $[3]$ are linearly independent. Then such an $n$-tuple $(v_1,\ldots,v_n)$ is uniquely determined by the triple $(v_1,v_2,v_3)$, and the latter can be arbitrary. Consequently, $\dim\ker(L_\Delta)=9$ or, equivalently, $\rank(\MM_\Delta)=\frac{3n-9}3=n-3$.

We have proved the following lemma, which will be used in the next section:

\begin{lemma}\label{criterion}
Let $\Delta$ be a simplicial disc on the vertex set $\{1,\ldots,n\}$. The isomorphism classes of reduced conic realizations of $\Delta$ is bijective to the set of integer $m\times n$-matrices ${\mathbb M}_\Delta$, where $m$ is the number of edges $\{k,l\}\in\Delta$ with $k<l$, admitting vertices $i,j\in\Delta$ with $\{i,k,l\},\ \{j,k,l\}\in\Delta$, and the entries are
\begin{align*}
&m_{\alpha\beta}=
\begin{cases}
&1\ \text{if}\ \beta=\{i,j\},\\
&\lambda_{kl}\ \text{if}\ \beta=k,\\
&\mu_{kl}\ \text{if}\ \beta=l,\\
&0\ \text{if}\ \beta\notin\{i,j,k,l\},\\
\end{cases},\\
&\qquad\qquad\qquad\qquad\qquad\qquad\qquad
\text{where}\ \ \alpha=\{k,l\},
\end{align*}
subject to the relations:
\begin{enumerate}[\rm(a)]
\item $\lambda_{kl}+\mu_{kl}\le-3$,
\item $\lambda_{kl}>0$ whenever $l$ is on the boundary of $\Delta$ and symmetrically for $\mu_{kl}$,
\item $\rank({\mathbb M}_\Delta)=n-3$.
\end{enumerate}
\end{lemma}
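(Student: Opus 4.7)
The plan is to formalize the construction sketched immediately before the lemma, establishing the claimed bijection in both directions.

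First I would argue the forward direction. Starting from a reduced conic realization $\iota:\Delta\to\RR^3$ with corresponding cone $C$, Lemma~\ref{alternative}(b) guarantees that every facet $\{i,j,k\}\in\Delta$ maps to a unimodular simplex whose affine hull has lattice distance one from $0$. For an interior edge $\{k,l\}$ adjacent to two triangles $\{i,k,l\},\{j,k,l\}\in\Delta$, both $\{[i],[k],[l]\}$ and $\{[j],[k],[l]\}$ are $\ZZ$-bases of $\ZZ^3$. Since $[i]$ and $[j]$ lie on opposite sides of the plane $\RR[k]+\RR[l]$ by convexity of $|\iota(\Delta)|$ towards $0$, the expansion of $[j]$ in the first basis has $[i]$-coefficient $-1$; this yields a unique integer relation $[i]+[j]+\lambda_{kl}[k]+\mu_{kl}[l]=0$. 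Applying the dual functional $f$ with $f([i])=f([k])=f([l])=1$ gives $f([j])=-1-\lambda_{kl}-\mu_{kl}$, and the strict inequality $f([j])>1$ (again convexity of the facet $\{i,k,l\}$ towards $0$) forces the integer inequality $\lambda_{kl}+\mu_{kl}\le-3$, which is condition (a). The sign conditions in (b) are the analogous translation of the requirement that $\RR_+|\iota(\Delta)|$ be a cone along boundary rays.

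Next I would verify $\rank(\MM_\Delta)=n-3$ by analyzing the linear map $L_\Delta:(\RR^3)^n\to(\RR^3)^m$. Its kernel consists of tuples $(v_1,\ldots,v_n)$ satisfying the system~(\ref{Relations}) upon $v_i\mapsto[i]$. Since $\Delta$ is a simplicial disc, its dual graph is connected and its fundamental cycles correspond to interior vertices of $\Delta$. Choosing a spanning tree of the dual graph, any initial triple $(v_{i_0},v_{j_0},v_{k_0})\in(\RR^3)^3$ attached to a fixed facet $\{i_0,j_0,k_0\}$ propagates uniquely to the remaining vertices along the tree; simple connectivity of the disc ensures that propagation around each fundamental cycle returns the starting values, so the system is consistent. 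Therefore the projection $\ker(L_\Delta)\to(\RR^3)^3$ onto the initial triple is a linear isomorphism, giving $\dim\ker(L_\Delta)=9$ and $\rank(\MM_\Delta)=(3n-9)/3=n-3$.

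For the converse, given an integer matrix of the prescribed shape whose entries satisfy (a), (b) and (c), I would pick a facet $\{i_0,j_0,k_0\}\in\Delta$ and send its vertices to a $\ZZ$-basis of $\ZZ^3$. The rank condition (c) is exactly what is needed so that the propagation scheme above delivers unique integer vectors $v_1,\ldots,v_n$ satisfying the relations: consistency around cycles is encoded by the kernel having dimension $9$, matching the nine degrees of freedom in the initial basis, while integrality is preserved because each local propagation step is an integer linear combination coming from a unimodular triangle. Condition (a) at each interior edge then yields convexity of the resulting polytopal disc towards $0$, condition (b) ensures that the conical set generated by the disc is a cone, and unimodularity of each facet together with lattice distance one propagate from the initial basis via the relations themselves. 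Lemma~\ref{alternative}(b) then delivers a reduced conic realization, and different choices of initial basis produce lattice-isomorphic cones, so the two assignments are mutually inverse on isomorphism classes.

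The main obstacle I expect is the clean formulation of the consistency of propagation around closed cycles in the dual graph, which is geometrically governed by the disc being simply connected and algebraically captured by the rank condition (c). I would handle this by a spanning-tree-plus-fundamental-cycles argument, relating each fundamental cycle of the dual graph to a small loop around an interior vertex of $\Delta$ and verifying consistency there by a direct unimodularity computation.
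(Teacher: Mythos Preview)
Your proposal is correct and follows essentially the paper's own argument: set up the relations~(\ref{Relations}), translate the convexity requirements into conditions (a) and (b), and compute $\rank(\MM_\Delta)=n-3$ by showing $\dim\ker(L_\Delta)=9$. The only difference is that your rank computation, via a spanning tree of the dual graph and consistency around fundamental cycles tied to simple connectivity, is more elaborate than necessary; the paper simply observes that once a realization $([1],\ldots,[n])$ exists with $[1],[2],[3]$ linearly independent, every kernel element is of the form $(T[1],\ldots,T[n])$ for a unique linear $T:\RR^3\to\RR^3$, so $\ker(L_\Delta)\cong\operatorname{Hom}(\RR^3,\RR^3)$ has dimension $9$---no appeal to simple connectivity or cycle monodromy is needed.
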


\subsection{Non-stacked bottom simplicial discs}\label{Non_stacked} Let $\Delta$ be the simplicial disc with 6 vertices as shown on Figure 4 on the left and $\Delta_n$ be the simplical complex of the stellar triangulation of the regular $n$-gon:

\begin{figure}[h!]
\caption{Complexes $\Delta$ and $\Delta_6$}
\vspace{.25in}
\includegraphics[scale=5]{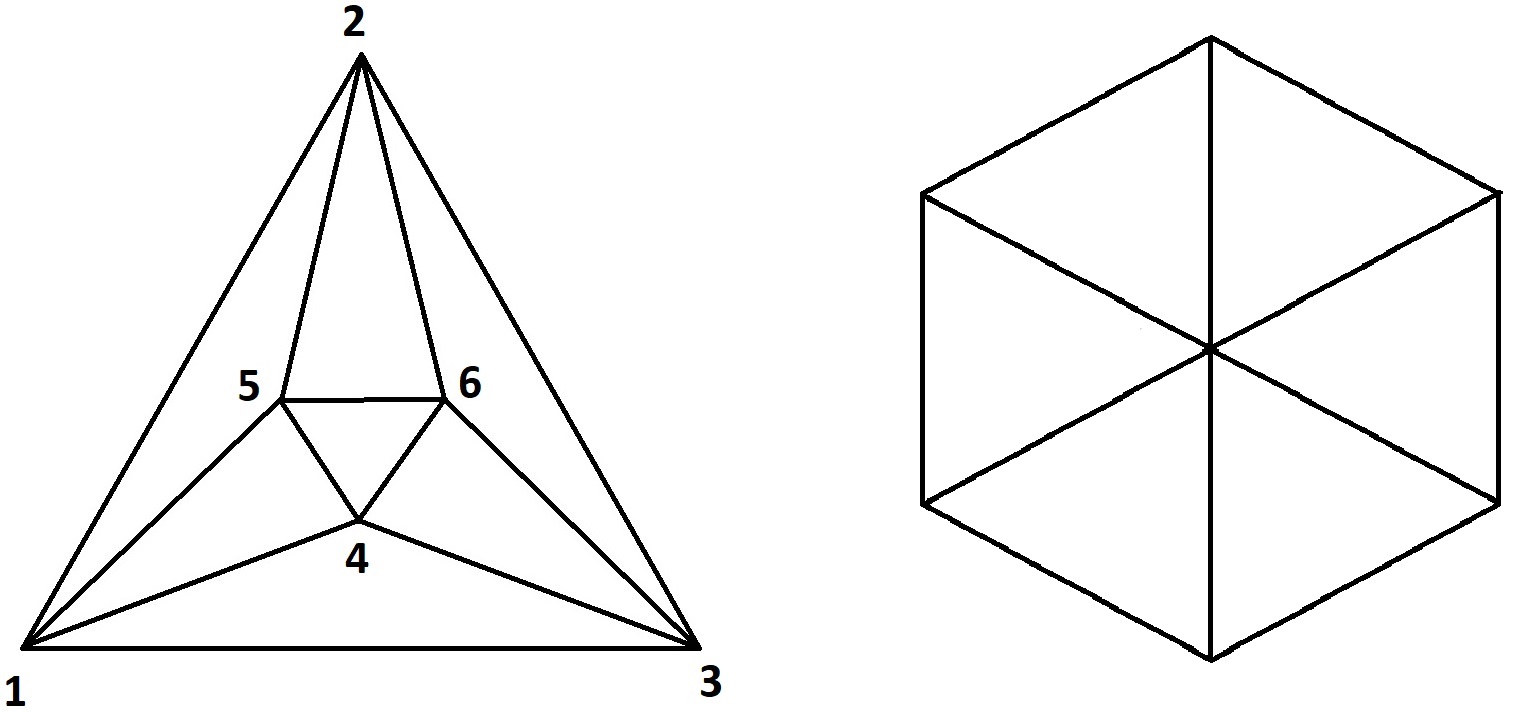}
\end{figure}

\begin{theorem}\label{Delta}
\begin{enumerate}[\rm(a)]
\item The isomorphism classes of reduced conic realizations of $\Delta$ are in a natural bijective correspondence with $(\ZZ_{\ge3})^3$.
\item $\Delta_n$ is reduced bottom for every $n\ge3$.
\end{enumerate}
\end{theorem}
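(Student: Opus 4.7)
The plan is to exploit the matrix criterion of Lemma~\ref{criterion} in both parts.

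For part (a), the complex $\Delta$ has $n=6$ vertices and, as shown on the left of Figure~4, exactly $3$ interior edges, so reduced conic realizations of $\Delta$ correspond bijectively to integer $3\times 6$ matrices $\MM_\Delta$ meeting conditions (a)--(c) of Lemma~\ref{criterion}. I would first read off, for each interior edge $\{k,l\}$, the two opposite apices $i,j$ in the two incident triangles, and write out the relation $[i]+[j]+\lambda_{kl}[k]+\mu_{kl}[l]=0$. The rank condition $\rank(\MM_\Delta)=n-3=3$, combined with $\lambda_{kl}+\mu_{kl}\le -3$ and the sign conditions of Lemma~\ref{criterion}(b), collapses the six a priori entries $\{\lambda_{kl},\mu_{kl}\}$ to exactly three independent integer degrees of freedom $c_1,c_2,c_3\ge 3$, one attached to each interior edge. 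This is the $2$-dimensional analogue of Remark~\ref{One-dimensional}, with each interior edge carrying its own ``folding parameter''. Any triple in $(\ZZ_{\ge 3})^3$ is thereby realized by the resulting matrix; since the $c_i$'s are intrinsic numerical invariants of the toric defining relations, distinct triples produce non-isomorphic cones, yielding the claimed bijection.

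For part (b), I would produce for each $n\ge 3$ an explicit reduced conic realization of $\Delta_n$. Set the central vertex at $v_0=(0,0,1)$ and the boundary vertices as $v_i=(w_i,h_i)\in\ZZ^2\times\ZZ$. The unimodularity of every triangle $\{v_0,v_i,v_{i+1}\}$ reduces to $\det(w_i,w_{i+1})=\pm 1$, and the distance-one condition of Lemma~\ref{alternative}(b)(iii) is then automatic. Cyclic sequences $w_1,\ldots,w_n$ in $\ZZ^2$ of primitive vectors with $\det(w_i,w_{i+1})=\pm 1$ winding once around the origin exist for every $n\ge 3$: starting from the $\PP^2$-fan $\{(1,0),(0,1),(-1,-1)\}$, one iteratively inserts a new ray $w+w'$ between consecutive rays $w,w'$ of the current fan, which preserves unimodularity and increases the length by one. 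With the $w_i$'s fixed, it remains to choose integer heights $h_i$ satisfying the finite system of linear inequalities encoding the convexity towards $0$ of Lemma~\ref{alternative}(b)(i) and the sign and magnitude conditions of Lemma~\ref{criterion}(a)--(b) at every spoke. A direct verification shows that the system is simultaneously satisfiable by choosing all $h_i$ sufficiently large and in general position, and the rank condition (c) holds automatically since the $n$ cyclic linear relations among $n+1$ vectors in $\ZZ^3$ generically attain rank $n-2$.

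The principal obstacle is the cyclic closure in part (b). Because the last triangle $\{v_0,v_n,v_1\}$ meets the union of the previous triangles along two edges rather than one, neither the stacking result Theorem~\ref{MAIN}(b) nor a single application of Lemma~\ref{Patching} can close the disc. The technical heart is therefore to show that the height inequalities admit an integer solution \emph{globally and cyclically} around the full ring of spokes, which requires exploiting the flexibility of the iterative blow-up construction of the planar fan.
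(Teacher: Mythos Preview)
Your proposal for part~(a) rests on a misreading of the complex $\Delta$. It does \emph{not} have $3$ edges shared by two triangles; it has $9$. The complex is a triangulation of a triangle with boundary vertices $1,2,3$ and interior vertices $4,5,6$, built from the seven triangles $\{1,3,4\},\{1,4,5\},\{1,2,5\},\{2,5,6\},\{2,3,6\},\{3,4,6\},\{4,5,6\}$; the paper's matrix $\MM_\Delta$ is accordingly $9\times 6$, not $3\times 6$. The entire force of the argument lies in the rank condition $\rank(\MM_\Delta)=3$ imposed on a $9\times 6$ matrix: six of the nine rows must be integer combinations of the remaining three, and working out these dependencies (together with the inequalities) is what pins the parameters down to a triple $(x,y,z)\in(\ZZ_{\ge3})^3$. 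In your $3\times 6$ picture the rank-$3$ condition is generic and imposes nothing, so the ``collapse to three degrees of freedom'' you assert has no source. Your overall strategy (apply Lemma~\ref{criterion}) matches the paper's, but the combinatorics you feed into it is wrong.

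For part~(b) your approach is genuinely different from the paper's, and the difficulty you flag at the end is exactly where it breaks down. The paper never attempts to solve a global cyclic system of height inequalities. Instead it starts from the explicit Fano realizations $C(F_3),C(F_4),C(F_5)$ and then uses the cone-gluing Lemma~\ref{Patching} to ``crack'' a symmetric pair of triangles in half, increasing $n$ by $2$ at each step while preserving reduced-bottomness; this completely sidesteps the cyclic closure. Your direct construction faces a real obstruction: with $v_0=\ee_3$ and $v_m=(w_m,h_m)$, the spoke relation reads $v_{m-1}+v_{m+1}+A\,v_0+B\,v_m=0$ with $B=-c_m$ (where $w_{m-1}+w_{m+1}=c_m w_m$), and the convexity and boundary conditions of Lemma~\ref{criterion} interact badly with the identity $\sum_m c_m=3n-12$ for a smooth complete $2$-fan. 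In particular, for $n\ge 7$ there is no smooth Fano $n$-gon, so by Theorem~\ref{Fano_bottom} no \emph{regular} reduced conic realization of $\Delta_n$ exists; any realization you build must be non-regular (cf.\ Remark~\ref{Fano_construct}), and ``choose all $h_i$ large and generic'' does not obviously produce one. The hand-wave at precisely this point is the gap.
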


\begin{proof} (a) Using the notation, introduced in Lemma \ref{criterion}, the matrix $\MM_\Delta$ is

\medskip
\begin{center}
\begin{tabular}{|c || c | c | c | c | c | c | }
\hline
${\mathbb M}_\Delta$ & 1 & 2 & 3 & 4 & 5 & 6\\
\hline
\hline
14 & $\lambda_{14}$ & 0 & 1 & $\mu_{14}$ & 1 & 0\\ 
\hline
15 & $\lambda_{15}$ & 1 & 0 & 1 & $\mu_{15}$ & 0\\  
\hline
25 & 1 & $\lambda_{25}$ & 0 & 0 & $\mu_{25}$ & 1\\
\hline
26 & 0 & $\lambda_{26 }$ & 1 & 0 & 1 & $\mu_{26}$\\ 
\hline
36 & 0 & 1 & $\lambda_{36}$ & 1 & 0 & $\mu_{36}$\\  
\hline
34 & 1 & 0 & $\lambda_{34}$ & $\mu_{34}$ & 0 & 1\\
\hline
46 & 0 & 0 & 1 & $\lambda_{46}$ & 1 & $\mu_{46}$\\ 
\hline
54 & 1 & 0 & 0 & $\lambda_{45}$ & $\mu_{45}$ & 1\\  
\hline
56 & 0 & 1 & 0 & 1 & $\lambda_{56}$ & $\mu_{56}$ \\
\hline
\end{tabular}
\end{center}

Lemma \ref{criterion}(c) implies that every row in this matrix must be an integral linear combination of the last three rows. Lemma \ref{criterion}(a,b) leads to a number of constrains on the entries of ${\mathbb M}_\Delta$. They are amenable to an effective analysis (by hand), showing that  every row in ${\mathbb M}_\Delta$ must have exactly three non-zero entries. Consequently, for any reduced conical realization $C$ of $\Delta$ and the \emph{geometric} simplicial complex $\Delta_C$, obtained by intersecting the cones, determined by the triangles in $\Delta$, with an affine plane, meeting $C$ transversally, there are only two possibilities, shown in Figure 5.
\begin{figure}[h!]\label{Delta'}
\caption{Geometric simplicial complex $C_\Delta$}
\vspace{.25in}
\includegraphics[scale=5.5]{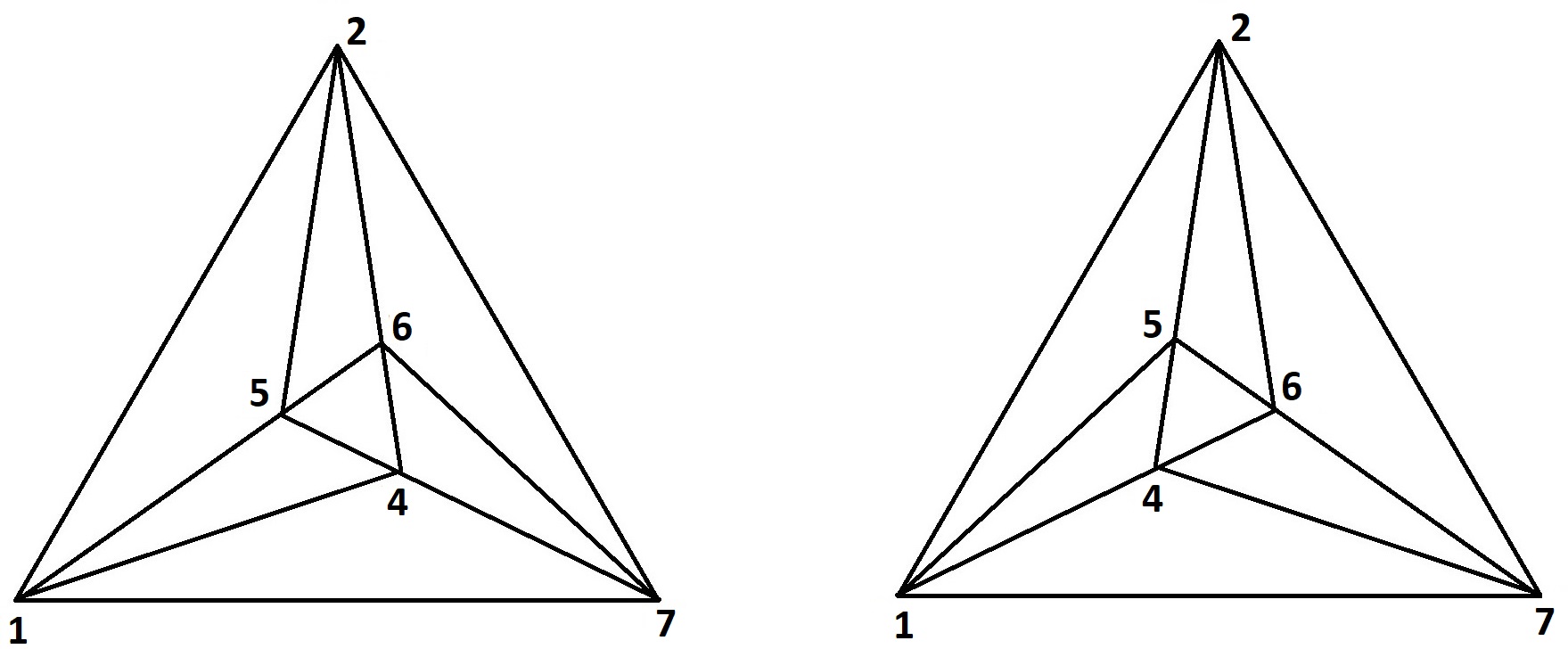}
\end{figure}
These two possibilities lead to isomorphic reduce conic realizations. Finally, the isomorphism classes of the reduced conic realizations, corresponding to the left geometric simplicial complex in Figure 5, are determined by the matrices
$$
{\mathbb M}_\Delta=\begin{pmatrix}
0 & 0 & 1 & -z & 1 & 0\\ 
y & 1 & 0 & 1 & yx & 0\\  
1 & 0 & 0 & 0 & x & 1\\
0 & z & 1 & 0 & 1 & -yz\\ 
0 & 1 & 0 & 1 & 0 & -y\\  
1 & 0 & -x & xz & 0 & 1\\
0 & 0 & 1 & -z & 1 & 0\\ 
1 & 0 & 0 & 0 & x & 1\\  
0 & 1 & 0 & 1 & 0 & -y\\
\end{pmatrix}
$$
where $x,y,z$ are arbitrary integers $\ge3$. This proves Theorem \ref{Delta}(a).

\medskip An example of a reduced conic realization of $\Delta$ of the type on the left in Figure 5 is given by

\begin{center}
\begin{tabular}{l l l l}
$[1]=(1,0,0)$, & & & $[4]=(0,1,0)$,\\
$[2]=(-3,-1,9)$, & & & $[5]=(0,0,1)$,\\ 
$[3]=(0,3,-1)$, & & & $[6]=(-1,0,3)$.\\
\end{tabular}
\end{center}

\medskip\noindent(b) For $\Delta_3$, $\Delta_4$, and $\Delta_5$, we have the reduced conic realizations
\begin{align*}
C(F_i):=\sum_{\vertex(F_i)}\RR_+(v,2),\quad i=3,4,5,
\end{align*}
where:
\begin{align*}
&F_3=\conv(\ee_1,\ee_2,-\ee_1-\ee_2),\\
&F_4=\conv\big(\ee_1,\ee_2,-\ee_1,-\ee_2),\\ &F_5=\conv\big(\ee_1,\ee_2,-\ee_1,-\ee_2,\ee_1+\ee_2).
\end{align*}

Figure 6 represents $45^\text{o}$-rotations of $F_4$ and $F_5$.
 
\medskip\noindent\emph{Notice.} The cones $C(F_i)$ are the special cases of Example \ref{useful}, corresponding to the smooth Fano polygons $F_3,F_4,F_5$, and the parameter $k=1$.

\begin{figure}[h!]
\caption{Smooth Fano polygons $F_4$ and $F_5$.}
\vspace{.15in}
\includegraphics[scale=7]{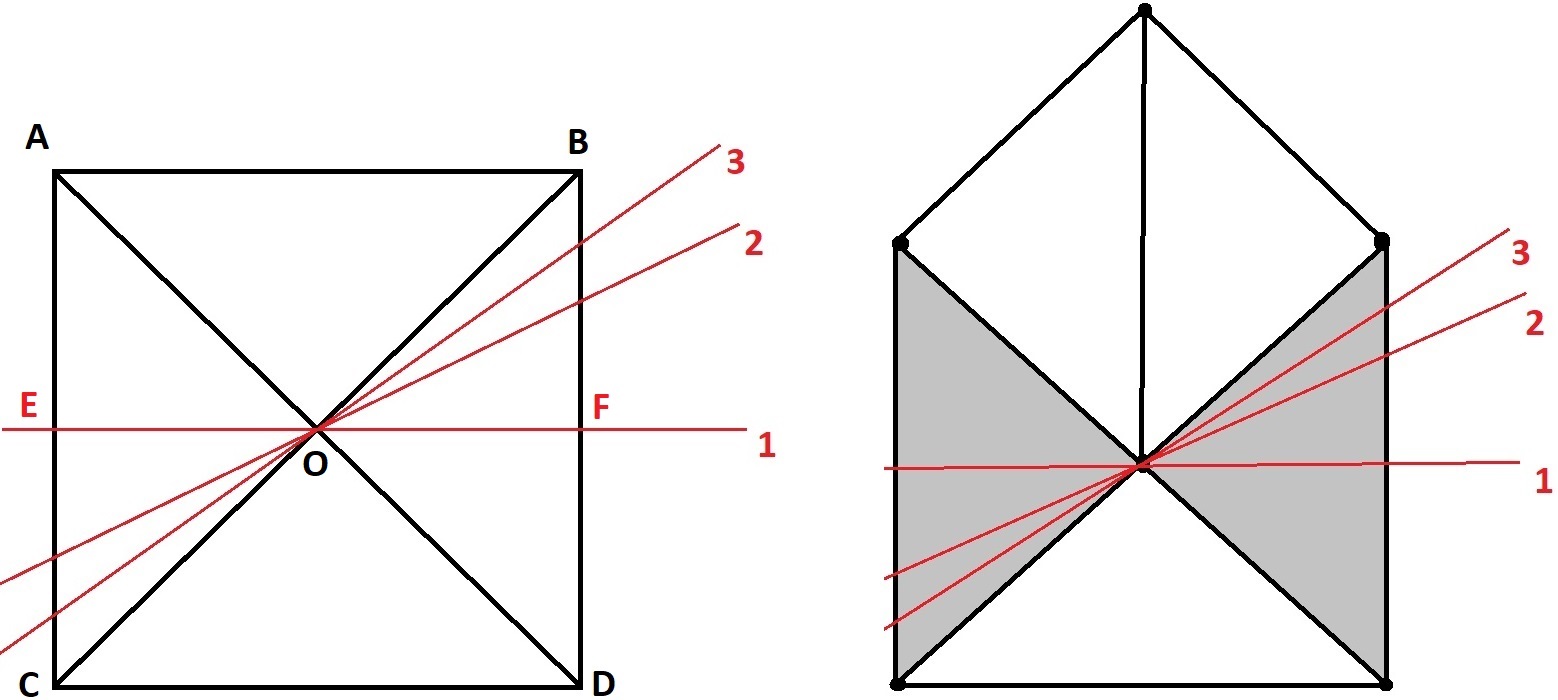}
\end{figure}

\medskip\noindent\emph{Case:} $n$ even. The bottom $\B(F_4)$ consists of the triangles 
$$
\conv(A,O,C),\ \conv(A,O,B),\ \conv(B,O,D),\ \conv(C,O,D),
$$
where $O=\ee_3$. Consider the plane $H\subset\RR^3$ through $0\in\RR^3$ and $O$, that cuts the triangles $\conv(AOC)$ and $\conv(BOD)$ exactly in half, as shown on Figure 6; the plane $H$ is represented by the line labeled 1. Let $C^+$ be the half of the cone $C(F_4)$ containing $A,B$ and $C^-$ that, containing $C,D$. Then the primitive lattice point in the direction of $E=[AC]\cap H$ is $A+C$ and that in the direction of $F=[BD]\cap H$ is $B+D$. One has:
\begin{align*}
&\B(C^+)=\conv(A,O,A+C)\cup\conv(A,O,B)\cup\conv(B,O,B+D),\\
&\B(C^-)=\conv(C,O,C+C)\cup\conv(C,O,D)\cup\conv(D,O,B+D).
\end{align*}
In particular, $\BB(C^+)$ and $\BB(C^-)$ can be conically glued along their common part in the sense of Section \ref{REALIZATION}. This gluing can be carried out so that the $180^0$-rotational symmetry about the axis $\RR O\subset\RR^3$  between $\conv(C,O,E)$ and $\conv(B,O,F)$ is respected: one chooses the bases $\mathcal B_1$ and $\mathcal B_2$ in the proof of Lemma \ref{Patching} in the form:
\begin{align*}
\mathcal B_1=\{u,O,w\},\qquad\mathcal B_2=\{-u,O,w\},
\end{align*}
where $u\in(\RR^2,0)\setminus H$, $w\in(\RR^2,0)\cap H$, and $\gamma=O$. The result of this gluing is a reduced conic realization of the simplicial complex $\BB(C^+)\vee\BB(C^-)$, which contains a pair of unimodular triangles, invariant under the $180^{\text{o}}$ rotation about $\RR O$: they correspond to $\conv(C,O,E)$ and $\conv(B,O,F)$. The complex $\BB(C^+)\vee\BB(C^-)$ is combinatorially equivalent to the triangulation of the square $\conv(A,B,C,D)$ into 6 triangles, sharing the vertex $O$ (Figure 6). Next one carries out the similar process of `cracking in half' with respect to this pair of triangles, leading to a reduced bottom realization of the simplicial disc with 8 facets, combinatorially equivalent to the dissection of the previous triangulation of $\conv(A,B,C,D)$ in Figure 6 by the line labeled 2. By iterating the process, one proves Theorem \ref{Delta} for $n\ge4$ even.

\medskip\noindent\emph{Case:} $n$ odd. The same argument we used for $n$ even applies to the shaded pair of triangles in $F_5$ (Figure 6), yielding the result for $n\ge5$ odd.
\end{proof}

\medskip\noindent\emph{Acknowledgment.} Thanks to Tamara Mchedlidze for a helpful discussion on convex graph drawing that motivated Theorem \ref{evidence}.

\bibliographystyle{plain}

\bibliography{bibliography}

\begin{thebibliography}{10}

\bibitem{Borel}
Armand Borel.
\newblock {\em Linear algebraic groups}, volume 126 of {\em Graduate Texts in
  Mathematics}.
\newblock Springer-Verlag, New York, second edition, 1991.

\bibitem{Stacked}
Arne Br{\o}ndsted.
\newblock {\em An introduction to convex polytopes}, volume~90 of {\em Graduate
  Texts in Mathematics}.
\newblock Springer-Verlag, New York-Berlin, 1983.

\bibitem{Plycom}
Winfried Bruns and Joseph Gubeladze.
\newblock Polyhedral algebras, arrangements of toric varieties, and their
  groups.
\newblock In {\em Computational commutative algebra and combinatorics ({O}saka,
  1999)}, volume~33 of {\em Adv. Stud. Pure Math.}, pages 1--51. Math. Soc.
  Japan, Tokyo, 2002.

\bibitem{Kripo}
Winfried Bruns and Joseph Gubeladze.
\newblock {\em Polytopes, rings, and $K$-theory}.
\newblock Springer Monographs in Mathematics. Springer-Verlag, New York, 2009.

\bibitem{Triang}
Jes\'{u}s~A. De~Loera, J\"{o}rg Rambau, and Francisco Santos.
\newblock {\em Triangulations}, volume~25 of {\em Algorithms and Computation in
  Mathematics}.
\newblock Springer-Verlag, Berlin, 2010.
\newblock Structures for algorithms and applications.

\bibitem{Eisenbud}
David Eisenbud.
\newblock {\em Commutative algebra}, volume 150 of {\em Graduate Texts in
  Mathematics}.
\newblock Springer-Verlag, New York, 1995.
\newblock With a view toward algebraic geometry.

\bibitem{Ewald}
G\"{u}nter Ewald.
\newblock {\em Combinatorial convexity and algebraic geometry}, volume 168 of
  {\em Graduate Texts in Mathematics}.
\newblock Springer-Verlag, New York, 1996.

\bibitem{Klein}
J\"{o}rg Gretenkort, Peter Kleinschmidt, and Bernd Sturmfels.
\newblock On the existence of certain smooth toric varieties.
\newblock {\em Discrete Comput. Geom.}, 5(3):255--262, 1990.

\bibitem{Fano_Nill}
Alexander~M. Kasprzyk and Benjamin Nill.
\newblock Fano polytopes.
\newblock In {\em Strings, gauge fields, and the geometry behind}, pages
  349--364. World Sci. Publ., Hackensack, NJ, 2013.

\bibitem{Fundamental}
Eduard Looijenga.
\newblock Discrete automorphism groups of convex cones of finite type.
\newblock {\em Compos. Math.}, 150(11):1939--1962, 2014.

\bibitem{Oda}
Tadao Oda.
\newblock {\em Convex bodies and algebraic geometry}, volume~15 of {\em
  Ergebnisse der Mathematik und ihrer Grenzgebiete (3) [Results in Mathematics
  and Related Areas (3)]}.
\newblock Springer-Verlag, Berlin, 1988.
\newblock An introduction to the theory of toric varieties, Translated from the
  Japanese.

\bibitem{Olinger}
Curtis Olinger.
\newblock Associated graded ring of a numerical monoid ring.
\newblock Masters Thesis, San Francisco State University (2022).

\bibitem{Stanley}
Richard Stanley.
\newblock Generalized {$H$}-vectors, intersection cohomology of toric
  varieties, and related results.
\newblock In {\em Commutative algebra and combinatorics ({K}yoto, 1985)},
  volume~11 of {\em Adv. Stud. Pure Math.}, pages 187--213. North-Holland,
  Amsterdam, 1987.

\bibitem{Simplicial_2}
Yusuke Suyama.
\newblock Simplicial 2-spheres obtained from non-singular complete fans.
\newblock {\em Dal\rq{}nevost. Mat. Zh.}, 15(2):277--287, 2015.

\end{thebibliography}

\end{document}